\renewcommand{\email}[1]{\emailname: #1} % change the email address font style
\renewenvironment{proof}{\noindent{\itshape Proof.}}{\smartqed\qed}
\newcommand{\bsn}{{\boldsymbol{n}}}
\newcommand{\bss}{{\boldsymbol{s}}}
\newcommand{\bsx}{{\boldsymbol{x}}}
\newcommand{\bsy}{{\boldsymbol{y}}}
\newcommand{\bsz}{{\boldsymbol{z}}}
\newcommand{\rmd}{{\mathrm{d}}}
\newcommand{\bbH}{{\mathbb{H}}}
\newcommand{\bbM}{{\mathbb{M}}}
\newcommand{\bbP}{{\mathbb{P}}}
\newcommand{\bbS}{{\mathbb{S}}}
\newcommand{\N}{{\mathbb{N}}} % natural numbers {1, 2, ...}
\newcommand{\R}{{\mathbb{R}}} % reals
\DeclareSymbolFont{bbold}{U}{bbold}{m}{n}
\DeclareSymbolFontAlphabet{\mathbbold}{bbold}
\newcommand{\calA}{{\mathcal{A}}}
\newcommand{\calG}{{\mathcal{G}}}
\newcommand{\calL}{{\mathcal{L}}}
\newcommand{\calN}{{\mathcal{N}}}
\newcommand{\calS}{{\mathcal{S}}}
\newcommand{\dist}{\ensuremath{\mathrm{dist}}}
\newcommand{\inpro}[2]{\left\langle{#1},{#2}\right\rangle}
\newcommand{\inprod}[2]{\left\langle{#1},{#2}\right\rangle}
\begin{document}
\titlerunning{BVP on spheres}
\title*{Numerical Solutions of a Boundary Value Problem on the Sphere Using Radial Basis Functions}
% Use \titlerunning{Short Title} for an abbreviated title

\author{Q.~T.~Le Gia}
% Use \authorrunning{Short Author} for an abbreviated list of authors

\institute{
 Q.~T.~Le Gia (\Letter)
 \at University of New South Wales, Sydney, Australia \\
 \email{qlegia@unsw.edu.au}
}
% See the above example for multiple authors with a common address,
% and one author with multiple addresses
% Mark the corresponding author with \Letter

\maketitle

% List each author here on a separate line
% "Lastname, Firstname Initials." as they should appear in the index
\index{Le Gia, Q.~T.}

\abstract{
Boundary value problems on the unit sphere arise naturally in geophysics
and oceanography when scientists model a physical quantity on large scales. 
Robust numerical methods play an important role in solving these problems.
In this article, we construct numerical solutions to a boundary value problem 
defined on a spherical sub-domain (with a sufficiently smooth boundary) 
using radial basis functions (RBFs). The error analysis 
between the exact solution and the approximation is provided.
Numerical experiments are presented to confirm theoretical estimates.
}

\section{Introduction}\label{sec:1}
Boundary value problems on the unit sphere arise naturally in geophysics 
and oceanography when scientists model a physical quantity on large scales. 
In that situation, the curvature of the Earth cannot be ignored, and a boundary 
value problem has to be formulated on a subdomain of the unit sphere. For example, 
the study of planetary-scale oceanographic flows in which oceanic eddies interact 
with topography such as ridges and land masses or evolve in closed basin lead to 
the study of point vortices on the surface of the sphere with walls \cite{gill,chaos}.
Such vortex motions can be described as a Dirichlet problem on a subdomain
of the sphere for the Laplace-Beltrami operator \cite{crowdy,kidambi-newton}.
Solving the problem exactly via conformal mapping methods onto the complex plane 
was proposed by Crowdy in \cite{crowdy}. Kidambi and Newton \cite{kidambi-newton} 
also considered such a problem, assuming the sub-surface of the sphere lent itself
to method of images. A boundary integral method for constructing numerical 
solutions to the problem was discussed in \cite{gemmrich}. 
n this work, we propose a collocation method using spherical radial basis
functions. Radial basis functions (RBFs) present a simple and effective way to 
construct approximate solutions to partial differential equations (PDEs) on spheres, via 
a collocation method \cite{morton-neamtu} or a Galerkin method \cite{LeG04}. They have been 
used successfully for solving transport-like equations on the sphere
\cite{Flyer-Wright-07-1,Flyer-Wright-09-1}. The method does not require a mesh, and is simple 
to implement. 

While meshless methods using RBFs have been employed to derive numerical
solutions for PDEs on the sphere only recently, it should be mentioned that approximation 
methods using RBFs for PDEs on bounded domains have been around for the last two decades. 
Originally proposed by Kansa \cite{Kan90,Kan90a} for fluid dynamics, approximation
methods for many types of PDEs defined on bounded domains in $\R^n$ using
RBFs have since been used widely \cite{Fas99,franke-schback-1998a,HonMao98,HonSch01}. 

To the best of our knowledge, approximation methods using RBFs have not been 
investigated for boundary value problems defined on subdomains of the unit sphere.
Given the potential of RBF methods on these problems, the present paper aims
to present a collocation method for boundary value problems on the sphere
and provide a mathematical foundation for error estimates. 
%While the method works 
%for arbitrary subdomains on the sphere with sufficiently smooth boundary, for simplicity 
%of the mathematical presentation, we just assume our boundary value problem is defined 
%on a spherical cap.

% here we present the structure of the paper
The paper is organized as follows: in Section~\ref{prelim} we review some preliminaries
on functions spaces, positive definite kernels, radial basis functions and the generalized
interpolation problem on discrete point sets on the unit sphere. In Section~\ref{bvp-section} 
we define the boundary value problem on a spherical cap, then present a collocation method using
spherical radial basis functions and our main result, Theorem~\ref{bvp-theorem}. 
%The proof of Theorem~\ref{bvp-theorem}
%are given in the next section, Section~\ref{proof-lemmas-section}. 
We conclude the paper 
by giving some numerical experiments in the last section.

Throughout the paper, we denote by $c,c_1,c_2,\ldots$ generic positive constants that 
may assume different values at different places, even within the same formula.

For two sequences $\{a_\ell\}_{\ell\in\N_0}$ and $\{b_\ell\}_{\ell\in\N_0}$,
the notation $a_\ell\sim b_\ell$ means that there exist positive constants 
$c_1$ and $c_2$ such that $c_1 b_\ell \leq a_\ell \leq c_2 b_\ell$ 
for all $\ell\in\N_0$.

%%%%%%%%%%%%%%%%%%%%%%%%%%%%%%%%%%%%%%%%%%%%%%%%%%%%%%%%%%%%%%%%%%%%%%%%%%%%%

\section{Preliminaries}\label{prelim}
Let $\bbS^n$ be the {\em unit sphere}, i.e.
$ 
    \bbS^n := \left\{ \bsx\in\R^{n+1}\,:\, \|\bsx\| = 1 \right\}
$
in the Euclidean space $\R^{n+1}$, where $\|\bsx\|:= \sqrt{\bsx\cdot\bsx}$
denotes the Euclidean norm of $\R^{n+1}$, induced by the Euclidean inner
product $\bsx\cdot\bsy$ of two vectors $\bsx$ and $\bsy$ in $\R^{n+1}$.
The surface area of the unit sphere $\bbS^n$ is denoted by $\omega_n$ and is 
given by 
\[
    \omega_n := |\bbS^n| = \frac{2 \pi^{(n+1)/2}}{\Gamma((n+1)/2)}.
\]

The {\em spherical distance}\/ (or geodesic distance) $\dist_{\bbS^n}(\bsx,\bsy)$ of two points 
$\bsx\in\bbS^n$ and $\bsy\in\bbS^n$ is defined as the length of a shortest geodesic 
arc connecting the two points. The geodesic distance $\dist_{\bbS^n}(\bsx,\bsy)$ is the 
angle in $[0,\pi]$ between the points $\bsx$ and $\bsy$, thus 
\[
    \dist_{\bbS^n}(\bsx,\bsy) := \arccos(\bsx\cdot\bsy).
\]

Let $\Omega$ be an open simply connected subdomain of the sphere.
For a point set $X:=\{\bsx_1,\bsx_2,\ldots,\bsx_N\}\subset\bbS^n$,  
the {\em (global) mesh norm}\/ $h_X$ is given by 
\[
    h_X = h_{X,\bbS^n}:= \sup_{\bsx\in\bbS^n} \inf_{\bsx_j\in X} \dist_{\bbS^n}(\bsx,\bsx_j),
\]  
and the {\em local mesh norm}\/ $h_{X,\Omega}$ with respect to
the subdomain $\Omega$ is defined by 
\[
    h_{X,\Omega} := \sup_{\bsx\in \Omega} 
    \inf_{\bsx_j\in X\cap \Omega} \dist_{\bbS^n}(\bsx,\bsx_j).
\]
The mesh norm $h_{X_2,\partial \Omega}$ of $X_2\subset\partial \Omega$ 
along the boundary $\partial \Omega$ is defined by 
\begin{equation}
\label{boundary-mesh-norm}
    h_{X_2,\partial \Omega} := \sup_{\bsx\in\partial \Omega}
    \inf_{\bsx_j\in X_2} \dist_{\partial \Omega}(\bsx,\bsx_j),
\end{equation}
where $\dist_{\bsx\in\partial \Omega}$ is here 
the geodesic distance along the boundary $\partial \Omega$. 
%which is the $(n-1)$-dimensional sphere of radius $\sin r$ given by
%\begin{equation}
%\label{boundary}
%    \partial \Omega = \left\{ (\cos r)\bsz + (\sin r) \bsz^\prime 
 %   \,:\,\bsz^\prime\in\R^{n+1}, \bsz^\prime\cdot\bsz = 0 
  %  \ \mbox{and}\ \|\bsz^\prime\|=1 \right\}.
%\end{equation}

%%%%%%%%%%%%%%%%%%%%%%%%%%%%%%%%%%%%%%%%%%%%%%%%%%%%%%%%%%%%%%%
%
\subsection{Sobolev spaces on the sphere}\label{fct-subsection}
Let $\Omega$ be $\bbS^n$ or an open measurable subset of $\bbS^n$.
Let $L_2(\Omega)$ denote the Hilbert space of (real-valued) 
square-integrable functions on $\Omega$ with the inner product
\[
    \inprod{f}{g}_{L_2(\Omega)} := 
    \int_{\Omega} f(\bsx) g(\bsx) \rmd\omega_n(\bsx)
\]
and the induced norm $\|f\|_{L_2(\Omega)}:= \inprod{f}{f}_{L_2(\Omega)}^{1/2}$.
Here $\rmd\omega_n$ is the Lebesgue surface area element
of the sphere $\bbS^n$.

The space of continuous functions on the sphere $\bbS^n$ and on 
the closed subdomain $\overline{\Omega}$ are denoted 
by $C(\Omega)$ and $C(\overline{\Omega})$ and are endowed with the 
supremum norms 
\[
    \|f\|_{C(\bbS^n)} := \sup_{\bsx\in\bbS^n}|f(\bsx)| 
    \qquad\mbox{and}\qquad
    \|f\|_{C(\overline{\Omega})} := \sup_{\bsx\in \overline{\Omega}} |f(\bsx)|,
\]
respectively.

A {\em spherical harmonic}\/ of degree $\ell\in\N_0$ (for the sphere $\bbS^n$) is 
the restriction of a homogeneous harmonic polynomial on $\R^{n+1}$ 
of exact degree $\ell$ to the unit sphere $\bbS^n$. The vector space of all 
spherical harmonics of degree $\ell$ (and the zero function) is denoted 
by $\bbH_\ell(\bbS^n)$ and has the dimension $Z(n,\ell):=\dim(\bbH_\ell(\bbS^n))$
given by 
\[
    Z(n,0)=1 \qquad\mbox{and}\qquad Z(n,\ell)=
    \frac{(2\ell+n-1)\Gamma(\ell+n-1)}{\Gamma(\ell+1)\Gamma(n)} 
    \quad\mbox{ for } \ell\in\N. 
\]  
%The asymptotic behavior of $Z(n,\ell)$ is $Z(n,\ell)\sim(\ell+1)^{n-1}$. 
By $\{ Y_{\ell,k} \,:\, k=1,2,\ldots,Z(n,\ell)\}$, 
we will always denote an 
$L_2(\bbS^n)$-orthonormal basis of $\bbH_\ell(\bbS^n)$
consisting of spherical harmonics of degree $\ell$.  
Any two spherical harmonics of different degree are orthogonal to each other, 
and the union of all sets $\{ Y_{\ell,k} \,:\, k=1,2,\ldots,Z(n,\ell)\}$ 
constitutes a complete orthonormal system for $L_2(\bbS^n)$. 
Thus any function $f \in L_2(\bbS^n)$ can be represented
in $L_2(\bbS^n)$-sense by its {\em Fourier series}\/ (or Laplace series) 
\[
    f = \sum_{\ell=0}^\infty \sum_{k=1}^{Z(n,\ell)} 
    \widehat{f}_{\ell,k} Y_{\ell,k},
\]
with the Fourier coefficients $\widehat{f}_{\ell,k}$ defined by 
\[
    \widehat{f}_{\ell,k} := \int_{\bbS^n} f(\bsx) Y_{\ell,k}(\bsx) \rmd\omega_n(\bsx).
\] 

The {\em space of spherical polynomials of degree $\leq K$}\/ (that is, 
the set of the restrictions to $\bbS^n$ of all polynomials on $\R^{n+1}$
of degree $\leq K$) is denoted by $\bbP_K(\bbS^n)$. We have 
$\bbP_K(\bbS^n)=\bigoplus_{\ell=0}^K \bbH_\ell(\bbS^n)$ and 
$\dim(\bbP_K(\bbS^n))=Z(n+1,K)\sim(K+1)^n$.  

Any orthonormal basis $\{ Y_{\ell,k} \,:\, k=1,2,\ldots,Z(n,\ell)\}$ 
of $\bbH_\ell(\bbS^n)$ satisfies the {\em addition theorem}\/ (see~\cite[p.10]{muller}) 
\begin{equation}
\label{addition}
    \sum_{k=0}^{Z(n,\ell)} Y_{\ell,k}(\bsx) Y_{\ell,k}(\bsy) = 
    \frac{Z(n,\ell)}{\omega_n} P_{\ell}(n+1;\bsx\cdot\bsy),  
\end{equation}
where $P_{\ell}(n+1;\cdot)$ is the {\em normalized Legendre polynomial}\/ 
of degree $\ell$ in $\R^{n+1}$. The normalized Legendre polynomials 
$\{P_{\ell}(n+1;\cdot)\}_{\ell\in\N_0}$,
form a complete orthogonal system for the space $L_2([-1,1];(1-t^2)^{(n-2)/2})$ of 
functions on $[-1,1]$ which are square-integrable with respect to the 
weight function $w(t):=(1-t^2)^{(n-2)/2}$. They satisfy $P_{\ell}(n+1;1)=1$ and
\begin{equation}
\label{orthogonal}
    \int_{-1}^{+1} P_\ell(n+1;t)P_{k}(n+1;t)(1-t^2)^{(n-2)/2} \rmd t = 
    \frac{\omega_n}{\omega_{n-1}Z(n,\ell)} \delta_{\ell,k},
\end{equation}
where $\delta_{\ell,k}$ is the Kronecker delta (defined to be one if
$\ell=k$ and zero otherwise). 
%The ultraspherical polynomials
%$C^\lambda_\ell$ (see \cite[Chapter~IV]{szegoe}) 
%are proportional to the normalized Legendre polynomials, 
%more precisely, (see \cite[p.~33]{muller})
%\[
%    C^{\frac{n-1}{2}}_{\ell}(t) = 
%    \frac{\Gamma(\ell+n-1)}{\Gamma(n-1)\Gamma(\ell+1)} P_\ell(n+1;t).
%\]

The {\em Laplace-Beltrami operator}\/  
$\Delta^\ast$ (for the unit sphere $\bbS^n$) is the angular 
part of the Laplace operator $\Delta=\sum_{j=1}^{n+1} \partial^2/\partial x_j^2$ 
for $\R^{n+1}$. Spherical harmonics of degree $\ell$ on $\bbS^n$
are eigenfunctions of $-\Delta^\ast$, more precisely,  
\[
    - \Delta^\ast Y_{\ell} = \lambda_\ell Y_{\ell}
    \quad\mbox{for all}\ Y_\ell\in\bbH_\ell(\bbS^n)
    \qquad\mbox{with}\qquad  
    \lambda_\ell := \ell(\ell+n-1).
\]

For $s\in\R_0^+$, the {\em Sobolev space}\/ $H^s(\bbS^n)$ 
is defined by (see \cite[Chapter~1, Remark~7.6]{LioMag})
\[
    H^s(\bbS^n) := \left\{ f \in L_2(\bbS^n) : 
    \sum_{\ell=0}^\infty (1+\lambda_\ell)^s \sum_{k=1}^{Z(n,\ell)} 
    |\widehat{f}_{\ell,k}|^2  < \infty \right\}.  
\]
The space $H^s(\bbS^n)$ is a Hilbert space with the inner product 
\[
    \inprod{f}{g}_{H^s(\bbS^n)} := \sum_{\ell=0}^\infty 
    (1+\lambda_\ell)^s \sum_{k=1}^{Z(n,\ell)} 
    \widehat{f}_{\ell,k} \widehat{g}_{\ell,k}
\] 
and the induced norm 
\begin{equation}
\label{Hs-norm}
    \|f\|_{H^s(\bbS^n)} := \inprod{f}{f}_{H^s(\bbS^n)}^{1/2}
    = \sum_{\ell=0}^\infty (1+\lambda_\ell)^s  
    \sum_{k=1}^{Z(n,\ell)} |\widehat{f}_{\ell,k}|^2. 
\end{equation} 

If $s>n/2$, then $H^s(\bbS^n)$ is embedded into $C(\bbS^n)$,
and the Sobolev space $H^s(\bbS^n)$ is a 
{\em reproducing kernel Hilbert space}\/. This means that there 
exists a kernel $K_s:\bbS^n\times\bbS^n\rightarrow\R$, the so-called 
reproducing kernel, with the following properties: 
(i) $K_s(\bsx,\bsy) = K_s(\bsy,\bsx)$ for all $\bsx,\bsy\in\nolinebreak[4]\bbS^n$,
(ii)~$K_s(\cdot,\bsy)\in H^s(\bbS^n)$ for all (fixed) $\bsy\in\bbS^n$,
and (iii) the reproducing property 
\[
    \inprod{f}{K_s(\cdot,\bsy)}_{H^s(\bbS^n)} = f(\bsy)
    \qquad\mbox{for all}\ f\in H^s(\bbS^n)
    \ \mbox{and all}\ \bsy\in\bbS^n.
\]

Sobolev spaces on $\bbS^n$ can also be defined using local charts 
(see \cite{LioMag}). Here we use a specific atlas
of charts, as in \cite{HubMor04}. 

Let $\bsz$ be a given point on $\bbS^n$, the spherical cap centered
at $\bsz$ of radius $\theta$ is defined by
\begin{eqnarray*}
G(\bsz,\theta) &=& \{ \bsy \in \bbS^{n} : \cos^{-1}(\bsz \cdot \bsy) < \theta \},  \qquad \theta \in (0, \pi),  
\end{eqnarray*}
where $\bsz\cdot \bsy$ denotes the Euclidean inner product of $\bsz$~and $\bsy$
in $\R^{n+1}$.

Let $\hat{\bsn}$ and $\hat{\bss}$ denote the north 
and south poles of $\bbS^n$, respectively. Then a simple cover 
for the sphere is provided by
\begin{equation}\label{def_Ui}
  U_1 = G(\hat{\bsn},\theta_0) \quad \mbox{and}\quad 
  U_2 = G(\hat{\bss},\theta_0),
  \mbox{ where } \theta_0 \in (\pi/2,2\pi/3).                                                                     
\end{equation}
The stereographic projection $\sigma_{\hat{\bsn}}$ of the punctured sphere                                           
$\bbS^n\setminus\{\hat{\bsn}\}$ onto $\R^n$ is defined as a mapping that maps                                           
$\bsx \in \bbS^n \setminus \{\hat{\bsn}\}$ to the intersection of the equatorial                                           
hyperplane $\{\bsz=0\}$ and the extended line that passes through $\bsx$ and $\hat{\bsn}$. The stereographic projection $\sigma_{\hat{\bss}}$ based on $\hat{\bss}$ can be defined analogously. We set                                                                                       
\begin{equation}\label{def_phi}                                                                                                   
  \psi_1 = \frac{1}{\tan(\theta_0/2)}\sigma_{\hat{\bss}}|_{U_1} 
  \quad \mbox{ and }\quad                              
  \psi_2 = \frac{1}{\tan(\theta_0/2)}\sigma_{\hat{\bsn}}|_{U_2},                                                     
\end{equation}
so that $\psi_k$, $k=1,2$, maps $U_k$ onto $B(0,1)$, the unit ball in $\R^n$.                                     
We conclude that $\calA =\{U_k,\psi_k\}_{k=1}^2$ is a $C^\infty$ atlas of                                         
covering coordinate charts for the sphere. It is known (see \cite{Rat94})                                     
that the stereographic coordinate charts $\{\psi_k\}_{k=1}^2$ as defined in                                       
(\ref{def_phi}) map spherical caps to Euclidean balls, but in general                                             
concentric spherical caps are not mapped to concentric Euclidean balls.                                           
The projection $\psi_k$, for $k=1,2$, does not distort too much the geodesic                                      
distance between two points $\bsx,\bsy \in \bbS^n$, as shown in \cite{LeGNarWarWen06}.

With the atlas so defined, we define the map $\pi_k$ which takes a real-valued                                    
function $g$ with compact support in $U_k$ into a real-valued function on $\R^n$ by                               
\[
 \pi_k(g) (x) = \left\{  \begin{array}{ll}                                                                        
                            g \circ \psi^{-1}_k(x),& \mbox{ if } x\in B(0,1),\\
                            0, & \mbox{ otherwise }.
                         \end{array}             
                \right. 
\]
Let $\{\chi_k:\bbS^n \rightarrow \R \}_{k=1}^2$ be a partition of unity subordinated to the atlas, i.e., a pair of non-negative infinitely differentiable functions $\chi_k$ on $\bbS^n$ with compact support in $U_k$, 
such that $\sum_{k}\chi_k = 1$. For any function $f:\bbS^n \rightarrow \R$, we can use the partition of unity to write
\[
  f = \sum_{k=1}^2 (\chi_k f), 
  \mbox{ where } (\chi_k f)(\bsx) = \chi_k(\bsx)f(\bsx), \quad \bsx \in \bbS^n.   
\]
The Sobolev space $H^s(\bbS^n)$ is defined to be the set
\[
  \left\{f \in L_2(\bbS^n) : 
  \pi_k(\chi_k f) \in H^s(\R^n) \quad\mbox{ for } k=1,2 \right\},
\]
which is equipped with the norm
\begin{equation}\label{defH1atlas}
  \|f\|_{H^s(\bbS^n)} = 
  \left( \sum_{k=1}^2 \|\pi_k (\chi_k f)\|^2_{H^s(\R^n)} \right)^{1/2}.
\end{equation}
This $H^s(\bbS^n)$ norm is equivalent to the $H^s(\bbS^n)$ norm 
given previously in \eqref{Hs-norm} (see \cite{LioMag}).

% define local Sobolev spaces on spherical caps
Let $\Omega \subset \bbS^n$ be an open connected set with sufficiently smooth boundary. In order to define the Sobolev spaces on $\Omega$, let
$
  D_k = \psi_k( \Omega \cap U_k)\text{ for } k=1,2.
$
The local Sobolev space $H^\tau(\Omega)$ is defined to be the set
\[
 f \in L_2(\Omega) : \pi_k(\chi_k f)|_{D_k} \in H^s(D_k)
 \text{ for } k=1,2, \; D_k \neq \emptyset,
\]
which is equipped with the norm
\begin{equation}\label{def:local Sob}
\|f\|_{H^s(\Omega)} = 
\left( \sum_{k=1}^2 \|\pi_k(\chi_k f)|_{D_k}\|^2_{H^s(D_k)}\right)^{1/2}
\end{equation}
where, if $\Omega=\emptyset$, then we adopt the convention that
$\|\cdot\|_{H^s(D_k)} = 0$.

It should be noted that if $s=m$ which is a positive integer,
we can define the local Sobolev norm via the following formula
\begin{equation}\label{def:local Sob 2}
 \|f\|_{H^m(\Omega)} =  \left( \sum_{k=0}^m 
 \inprod{\nabla^k f}{\nabla^k f}_{L_2(\Omega)} \right)^{1/2},
\end{equation}
where $\nabla$ is the surface gradient on the sphere.

%We observe that (\cite{HubMor04}) that there exists a positive
%constant $C_{\calA}$, depending on $\calA$ and the partition of
%unity $\{\chi_1,\chi_2\}$, such that the geodesic distance of
%supp$\{\chi_k\}$ from the boundary of $U_k$ is strictly greater than
%$C_{\calA}$. A spherical cap $G(\bsz,\theta)$ with $\theta < C_{\calA}/3$
%will have its closure being a subset of at least one of the open
%subsets $U_1$ or $U_2$, defined by \eqref{def_Ui}.

Now we state an extension theorem for a local domain on the sphere.
We follow a framework set out in \cite[Chapter 4.4]{michael-taylor-1}. 
To this end, let us consider Sobolev spaces $H^s(\R^n_{+})$, with
$\R^n_{+} = \{ \bsx \in \R^n: x_1 > 0 \}$. For $k \ge 0$ an integer,
let
\[
  H^k(\R^n_{+}) = \{u \in L^2(\R^n_{+}) : D^\alpha u \in L^2(\R^n_{+})
         \text{ for } |\alpha | \le k\}.
\]
Here, $D^\alpha u$ is considered as a distribution on the interior $\R^n_{+}$.
We claim that each $u \in H^k(\R^n_{+})$ is the restriction to $\R^n_{+}$ of
an element of $H^k(\R^n)$. To see this, fix an integer $N$, for an
$u \in \calS(\overline{\R^n_{+}})$ let
\[
  Eu(x) = \begin{cases}  
          u(x)  & \text{ for } x_1 \ge 0, \\
         \sum_{j=1}^N a_j u(-j x_1, \bsx'), & \text{ for } x_1 <0. 
      \end{cases}
\]
\begin{lemma}
One can pick the coefficients $a_1,\ldots,a_N$ such that the map $E$
as a unique continuous extension to
\[
  E : H^k(\R^n_{+}) \rightarrow H^k(\R^n), \text{ for } k \le N-1.
\]
\end{lemma}
\begin{proof}
Given $u \in \calS(\R^n)$, we get an $H^k$-estimate on $Eu$ provided all the derivatives
of $Eu$ of order $N-1$ match up at $x_1 = 0$, that is, provided 
\begin{equation}\label{linsys}
\sum_{j=1}^N (-j)^\ell a_j = 1, \text{ for } \ell=0,1,\ldots,N-1.
\end{equation}
The system \eqref{linsys} is a a linear system of $N$ equations for $N$ unknowns $a_j$;
its determinant is a Vandermonde determinant that is non-zero, so $a_j$ can be found.
\end{proof}

Now for $k \ge 0$ being an integer, let $H^k(\Omega)$ be the space of all 
$u \in L^2(\Omega)$ such that $Pu \in L^2(\Omega)$ for all differential operators $P$
of order $\le k$ with coefficients in $C^\infty(\overline{\Omega})$. By covering
a neighbourhood of $\partial \Omega \subset \bbS^n$ with coordinate patches and locally
using the extension operator $E$ from above, we get, for each finite $N$, an extension
operator
\begin{equation}\label{ext-op-k}
    E: H^k(\Omega) \rightarrow H^k(\bbS^n), \quad 0 \le k \le N-1.
\end{equation}
For real $s\ge 0$, we can use interpolation between Banach spaces 
(see \cite[Chapter 4.2]{michael-taylor-1}) to define
\begin{equation}\label{ext-op-s}
    E: H^s(\Omega) \rightarrow H^s(\bbS^n).
\end{equation}

%------------------------------------------------------------------------------
\begin{theorem}[Trace theorem]\label{trace}
Let $\Omega \subset \bbS^n$ be a local region with a sufficient smooth 
boundary. Then, for $s>1/2$, the restriction of $f\in H^{s}(\Omega)$ to $\partial\Omega$
is well defined, belongs to $H^{s-1/2}(\partial\Omega)$, and satisfies
\[
\|f\|_{H^{s-1/2}(\partial\Omega)} \le C \|f\|_{H^s(\Omega)}.
\]
\end{theorem}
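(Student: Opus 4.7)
The plan is to reduce the assertion to the classical Euclidean trace theorem using the atlas-based definition \eqref{def:local Sob} of $H^s(\Omega)$. Since $\Omega$ is a local region, the stereographic projection $\psi_k$ is a $C^\infty$ diffeomorphism from a neighbourhood of $\overline{\Omega\cap U_k}$ onto its image, and it carries the smooth boundary portion $\partial\Omega\cap U_k$ to a smooth hypersurface inside $\R^n$. Writing $f=\chi_1 f+\chi_2 f$ through the partition of unity and setting $f_k:=\pi_k(\chi_k f)\in H^s(D_k)$ with $D_k=\psi_k(\Omega\cap U_k)$, the image $\partial D_k=\psi_k(\partial\Omega\cap U_k)$ is a (possibly empty) smooth hypersurface. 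The standard Euclidean trace theorem then gives a bounded trace $\gamma_k : H^s(D_k)\to H^{s-1/2}(\partial D_k)$ satisfying $\|\gamma_k f_k\|_{H^{s-1/2}(\partial D_k)} \le C\|f_k\|_{H^s(D_k)}$.

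Next I would equip $\partial\Omega$ with the atlas inherited from $\cA$, namely $\{\partial\Omega\cap U_k,\;\psi_k|_{\partial\Omega\cap U_k}\}_{k=1}^{2}$, together with the partition of unity $\{\chi_k|_{\partial\Omega}\}$, so that (in analogy with \eqref{def:local Sob})
\[
\|f|_{\partial\Omega}\|_{H^{s-1/2}(\partial\Omega)}^{2}
=\sum_{k=1}^{2}\bigl\|\pi_k(\chi_k f|_{\partial\Omega})\big|_{\partial D_k}\bigr\|_{H^{s-1/2}(\partial D_k)}^{2}.
\]
Because $\psi_k$ is a smooth diffeomorphism on $U_k$, restricting to $\partial\Omega$ and then transporting to the chart agrees with transporting first and then restricting to $\partial D_k$, so $\pi_k(\chi_k f|_{\partial\Omega})|_{\partial D_k}=\gamma_k f_k$. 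Summing the Euclidean trace bounds and using \eqref{def:local Sob} yields
\[
\|f|_{\partial\Omega}\|_{H^{s-1/2}(\partial\Omega)}^{2}
\le C^{2}\sum_{k=1}^{2}\|f_k\|_{H^s(D_k)}^{2}
=C^{2}\|f\|_{H^s(\Omega)}^{2}.
\]
Well-definedness of the trace as an element of $H^{s-1/2}(\partial\Omega)$ then follows from boundedness together with the density of smooth functions in $H^s(\Omega)$, via the extension operator of Theorem~\ref{extension} if one wishes to argue by first extending to all of $\Sp^n$ and smoothing there.

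The main technical point is not the Euclidean trace inequality, which is classical, but checking that all the chart operations commute with restriction to the boundary in the way required by the partition-of-unity argument. Once one observes that $\partial\Omega$ is a smooth $(n-1)$-dimensional submanifold lying inside the $U_k$, the $\psi_k$ induce compatible smooth charts on $\partial\Omega$ and the $\chi_k$ restrict to a partition of unity along $\partial\Omega$, so this commutativity is routine. The only bookkeeping is to handle the possibility that $\partial\Omega\cap U_k=\emptyset$ for one value of $k$, by using the same empty-set convention as in \eqref{def:local Sob}.
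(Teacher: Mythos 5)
Your argument is correct and coincides with the paper's own proof: both define $\partial D_k=\psi_k(\partial\Omega\cap U_k)$, express the $H^{s-1/2}(\partial\Omega)$ norm through the inherited charts and partition of unity, apply the classical Euclidean trace theorem on each $D_k$, and sum. The extra remarks on commutativity of restriction with the chart maps and on the empty-chart convention are implicit in the paper's one-line norm identity, so nothing essential differs.
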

\begin{proof}
The boundary $\partial D_k$ of $D_k = \psi_k(\Omega \cap U_k)$ is
given by $\psi_k(\partial \Omega \cap U_k$ for $k=1,2$.
Then,
\[
 \|f\|^2_{H^{s-1/2}(\partial\Omega)} = 
 \sum_{k=1}^2 
 \|(\pi_k \chi_k f)|_{\partial D_k} \|^2_{H^{s-1/2}(\partial D_k)}.
\]
Using the trace theorem for bounded domains in $\R^n$ \cite[Theorem 8.7]{wloka}, there are constants $c_k>0$ for $k=1,2$ so that
\[
 \|(\pi_k \chi_k f)|_{\partial D_k} \|_{H^{s-1/2}(\partial D_k)}
 \le c_k \|(\pi_k \chi_k f)|_{D_k} \|_{H^{s}(D_k)}.
\]
Hence
\[
\|f\|^2_{H^{s-1/2}(\partial\Omega)}
\le \max\{c_1^2,c_2^2\} 
\sum_{k=1}^2 \|(\pi_k \chi_k f)|_{D_k} \|^2_{H^{s}(D_k)}
= \max\{c_1^2,c_2^2\}\|f\|^2_{H^s(\Omega)}.
\]
\end{proof}
%%%%%%%%%%%%%%%%%%%%%%%%%%%%%%%%%%%%%%%%%%%%%%%%%%%%%%%%%%%%%%%%%%%%%%%%%%%%%
%
\subsection{Positive definite kernels on the sphere and native spaces}
\label{sphbasfct-subsection}
A continuous real-valued kernel $\phi:\bbS^n\times\bbS^n\rightarrow\R$ is called 
\emph{positive definite}\/ on $\bbS^n$ if 
(i) $\phi(\bsx,\bsy)=\phi(\bsy,\bsx)$ for all $\bsx,\bsy\in\bbS^n$ and (ii) 
for every finite set of distinct points $X = \{\bsx_1,\bsx_2,\ldots,\bsx_N\}$ 
on $\bbS^n$, the symmetric matrix $[\phi(\bsx_i,\bsx_j)]_{i,j=1,2,\ldots,N}$
is positive definite. 

A kernel $\phi:\bbS^n\times\bbS^n\rightarrow\R$ defined via 
$\phi(\bsx,\bsy) := \Phi(\bsx\cdot \bsy)$, $\bsx,\bsy\in\bbS^n$,
with a univariate function $\Phi$, is called a {\em zonal}\/ kernel. 

Since the normalized Legendre polynomials $\{P_{\ell}(n+1;\cdot)\}_{\ell\in\N_0}$,
form a complete orthogonal system for $L_2([-1,1];(1-t^2)^{(n-2)/2})$,
any function $\Phi\in L_2([-1,1];(1-t^2)^{(n-2)/2})$ 
can be expanded into a {\em Legendre series}\/ (see (\ref{orthogonal})
for the normalization)
\begin{equation}
\label{Phi-expansion}
    \Phi(t) = \frac{1}{\omega_{n}} \sum_{\ell=0}^\infty 
    a_\ell Z(n,\ell) P_{\ell}(n+1;t), 
\end{equation}
with the Legendre coefficients
\[
    a_\ell := \omega_{n-1} \int_{-1}^{+1} \Phi(t) 
    P_\ell(n+1;t) (1-t^2)^{(n-2)/2} \rmd t.
\]
Due to (\ref{Phi-expansion}) and the addition theorem (\ref{addition}), 
a zonal kernel 
$\phi(\bsx,\bsy) := \Phi(\bsx\cdot \bsy)$, $\bsx,\bsy\in\bbS^n$,
where $\Phi\in L_2([-1,1];(1-t^2)^{(n-2)/2})$,
has the expansion
\begin{equation}
\label{phi-expansion}
    \phi(\bsx,\bsy) = \frac{1}{\omega_n} \sum_{\ell=0}^\infty
    a_\ell Z(n,\ell) P_\ell(n+1;\bsx\cdot\bsy)
    = \sum_{\ell=0}^\infty \sum_{k=1}^{Z(n,\ell)} 
    a_\ell Y_{\ell,k}(\bsx) Y_{\ell,k}(\bsy).
\end{equation}

In this paper we will only consider 
{\em positive definite zonal continuous kernels}\/ $\phi$ of the 
form (\ref{phi-expansion}) for which 
\begin{equation}
\label{kernel-condition}
\sum_{\ell=0}^\infty |a_\ell| Z(n,\ell) < \infty.
\end{equation}
This condition implies that the sums in (\ref{phi-expansion}) converge uniformly.

In \cite{chen-menegatto-sun}, a complete characterization of positive 
definite kernels is established: a kernel $\phi$ of the form 
(\ref{phi-expansion}) satisfying the condition (\ref{kernel-condition}) 
is positive definite {\em if and only if}\/ 
$a_\ell \ge 0$ for all $\ell\in\N_0$ 
and $a_\ell > 0$ for infinitely many even values of $\ell$ and
infinitely many odd values of $\ell$ (see also \cite{schoenberg} and
\cite{xu-cheney}).

With each positive definite zonal continuous kernel 
$\phi$ of the form (\ref{phi-expansion}) and satisfying the 
condition (\ref{kernel-condition}), we associate a  
{\em native space}: Consider the linear space 
\[
    F_\phi := \left\{ \sum_{j=1}^N \alpha_j \phi(\cdot,\bsx_j) \,:\,
    \alpha_j\in\R,\ \bsx_j\in\bbS^n,\ j=1,2,\ldots,N; \ N\in\N \right\},
\]
endowed with the inner product 
\[
    \inprod{ \sum_{j=1}^N \alpha_j \phi(\cdot,\bsx_j)} 
    {\sum_{i=1}^M \beta_i \phi(\cdot,\bsy_i)}_\phi :=
    \sum_{j=1}^N \sum_{i=1}^M \alpha_j \beta_i \phi(\bsx_j,\bsy_i)
\]
and the associated norm $\|f\|_\phi:=\inprod{f}{f}_\phi^{1/2}$.
The {\em native space}\/ $\calN_\phi$ associated with $\phi$ 
is now defined as the completion of $F_\phi$ with respect 
to the norm $\|\cdot\|_\phi$. By construction, 
the native space $\calN_\phi$ is a Hilbert space, 
and we will denote its inner product and norm also by 
$\inprod{\cdot}{\cdot}_\phi$ and $\|\cdot\|_\phi$, respectively.

The native space $\calN_\phi$ is a (real) 
{\em reproducing kernel Hilbert space}\/ with the reproducing kernel $\phi$. 
This means that (i) $\phi$ is symmetric,
(ii) $\phi(\cdot,\bsy)\in\calN_\phi$ for all (fixed) $\bsy\in\bbS^n$, and (iii) 
the {\em reproducing property}\/ holds, that is, 
\begin{equation}
\label{reproducing_property}
    \inprod{f}{\phi(\cdot,\bsy)}_\phi = f(\bsy), 
    \qquad\mbox{for all}\ f\in\calN_\phi\ \mbox{and all}\ \bsy\in\bbS^n.
\end{equation} 

It is known that the native space $\calN_\phi$ associated with a positive 
definite continuous zonal kernel $\phi$, given by (\ref{phi-expansion})
and satisfying the conditions (\ref{kernel-condition}) 
and $a_\ell>0$ for all $\ell\in\N_0$, can be described by 
\[
    \calN_\phi = \left\{f \in L_2(\bbS^n) \,:\,
    \sum_{\ell=0}^\infty \sum_{k=1}^{Z(n,\ell)}
    \frac{|\widehat{f}_{\ell,k}|^2}{a_\ell}  < \infty \right\},
\]
equipped with the inner product
\[
    \inprod{f}{g}_\phi = \sum_{\ell=0}^\infty \sum_{k=1}^{Z(n,\ell)}
    \frac{\widehat{f}_{\ell,k} \widehat{g}_{\ell,k}}{a_\ell}
\]
and the associated norm 
\begin{equation}
\label{norm-native-space}
    \|f\|_\phi = \inprod{f}{f}_\phi^{1/2} = 
    \left( \sum_{\ell=0}^\infty \sum_{k=1}^{Z(n,\ell)}
    \frac{|\widehat{f}_{\ell,k}|^2}{a_\ell} \right)^{1/2}.
\end{equation}
If $a_\ell>0$ for all $\ell\in\N_0$,
we can conclude, from the assumption (\ref{kernel-condition}), 
that the Fourier series of any $f\in\calN_\phi$ converges
uniformly and that the native space $\calN_\phi$ is embedded 
into $C(\bbS^n)$.

Comparing (\ref{norm-native-space}) with (\ref{Hs-norm}),
we see that if $a_\ell\sim(1+\lambda_\ell)^{-s}$, then 
$\|\cdot\|_\phi$ and $\|\cdot\|_{H^s(\bbS^n)}$ are equivalent 
norms, and hence $\calN_\phi$ and $H^s(\bbS^n)$ are the same space.  
%
%%%%%%%%%%%%%%%%%%%%%%%%%%%%%%%%%%%%%%%%%%%%%%%%%%%%%%%%%%%%%%%%%%%%%%%%%%%%%%%%%
%
\subsection{Generalized interpolation with RBFs}
\label{rbf-interpolation-subsection}
Let $\phi:\bbS^n\times\bbS^n\rightarrow\R$ be a positive definite
zonal continuous kernel given by (\ref{phi-expansion}) and satisfying the 
condition (\ref{kernel-condition}). Since the native space $\calN_\phi$ 
is a reproducing kernel Hilbert space with reproducing kernel $\phi$,
any continuous linear functional $\calL$ on $\calN_\phi$ has the representer 
$\calL_2\phi(\cdot,\cdot)$. (Here the index $2$ in $\calL_2 \phi(\cdot,\cdot)$ 
indicates that $\calL$ is applied to the kernel $\phi$ as a function of 
its second argument. Likewise $\calL_1 \phi(\cdot,\cdot)$ will indicate 
that $\calL$ is applied to the kernel $\phi$ as a function of its first argument.) 

For a linearly independent set $\Xi=\{\calL^1,\calL^2,\ldots,\calL^N\}$
of continuous linear functionals on~$\calN_\phi$, 
the {\em generalized radial basis function (RBF) interpolation problem}\/ 
can be formulated as follows:      
Given the values $\calL^1 f,\calL^2 f,\ldots,\calL^N f$ of a 
function $f\in\calN_\phi$, find the function $\Lambda_{\Xi} f$ in 
the $N$-dimensional approximation space
\[
    V_\Xi := {\ensuremath{\mathrm{span}}}
           \left\{ \calL^j_2\phi(\cdot,\cdot) \,:\, j=1,2,\ldots,N \right\}
\] 
such that the conditions
\begin{equation}
\label{interpol-1}
    \calL^i (\Lambda_\Xi f) = \calL^i f, \qquad i=1,2,\ldots,N,
\end{equation}
are satisfied. We will call the function $\Lambda_\Xi f\in V_\Xi$ the 
{\em radial basis function approximant (RBF approximant)}\/ of $f$.

Writing the RBF approximant $\Lambda_\Xi f$ as 
\[
    \Lambda_{\Xi} f(\bsx) = \sum_{j=1}^N \alpha_j \calL^j_2\phi(\bsx,\cdot),
    \qquad \bsx\in\bbS^n,
\]
the interpolation conditions (\ref{interpol-1}) can therefore be written as
\begin{equation}
\label{linear-system}
    \sum_{j=1}^N \alpha_j 
    \inprod{\calL^j_2 \phi(\cdot,\cdot)}{\calL^i_2\phi(\cdot,\cdot)}_\phi 
    = \sum_{j=1}^N \alpha_j \calL^i_1 \calL^j_2 \phi(\cdot,\cdot) \\
    = \calL^i f, \qquad i=1,2,\ldots,N. 
\end{equation}
Since $f\in\calN_\phi$, we have $\calL^i f = \inprod{f}{\calL^i_2\phi(\cdot,\cdot)}_\phi$, 
$i=1,2,\ldots,N$, and  we see that $\Lambda_\Xi f$ is just the 
{\em orthogonal projection}\/ of $f\in\calN_\phi$
onto the approximation space $V_\Xi$ with respect to $\inprod{\cdot}{\cdot}_\phi$. 
Therefore,
\begin{equation}\label{pythagoras}
\| f - \Lambda_{\Xi} f\|_\phi \le \|f\|_\phi.
\end{equation}
The linear system has always a unique solution, because its matrix 
$$[\calL^i_1 \calL^j_2 \phi(\cdot,\cdot)]_{i,j=1,2,\ldots,N}$$ is the 
Gram matrix of the representers of the linearly independent 
functionals in $\Xi$. 

We observe here that the linear system (\ref{linear-system})
can be solved for any given data set $\{\calL^i f\,:\,i=1,2,\ldots,N\}$,
where the data does not necessarily has to come from a function in the 
native space $\calN_\phi$, but may come from any function $f$ 
for which $\calL^i f$ is well-defined for all $i=1,2,\ldots,N$.
Even if $f$ is not in the native space we will use the notation $\Lambda_\Xi f$
for the solution of the generalized RBF interpolation problem (\ref{interpol-1}).
%
%%%%%%%%%%%%%%%%%%%%%%%%%%%%%%%%%%%%%%%%%%%%%%%%%%%%%%%%%%%%%%%%%
%%%%%%%%%%%%%%%%%%%%%%%%%%%%%%%%%%%%%%%%%%%%%%%%%%%%%%%%%%%%%%%%%%%%%
%
\subsection{Sobolev bounds for functions with scattered zeros}
We need the following results from \cite{HanNarWar12} concerning
functions with scattered zeros on a subdomain of a Riemannian manifold.
\begin{theorem}\label{thm:HNW12}
Let $\bbM$ be a Riemannian manifold, $\Omega \subset \bbM$ be a bounded,
Lipschitz domain that satisfies a certain uniform cone condition. Let
$X$ be a discrete set with sufficiently small mesh norm $h$. If
$u\in W^m_p(\Omega)$ satisfies $u|_X = 0$, then we have
\[
  \|u\|_{W^k_p(\Omega)} \le C_{m,k,p,\bbM} h^{m-k} \|u\|_{W^m_p(\Omega)}
\]
and
\[
 \|u\|_{L^\infty(\Omega)} \le C_{m,k,p,\bbM} h^{m-d/p} \|u\|_{W^m_p(\Omega)}.
\]
\end{theorem}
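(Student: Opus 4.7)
The plan is to reduce to the classical Euclidean sampling inequality via a covering argument and local charts. Since $\Omega$ is a bounded Lipschitz domain on $\bbM$ satisfying a uniform cone condition, I would first cover $\overline{\Omega}$ by a finite family of geodesic balls $\{B_i\}$ of radius comparable to $h$ such that each $B_i$ intersects at most a uniformly bounded number of others and such that each $B_i \cap \Omega$ satisfies a cone condition in local coordinates. On each $B_i$ of sufficiently small radius, the Riemannian exponential map (or a coordinate chart of the manifold) furnishes a bi-Lipschitz diffeomorphism $\varphi_i : \Omega \cap B_i \to D_i \subset \R^d$ whose metric distortion is controlled by a constant depending only on $\bbM$. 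Under this pullback the geodesic mesh norm of $X \cap B_i$ and the Euclidean mesh norm of $\varphi_i(X \cap B_i)$ in $D_i$ are comparable, and the Sobolev norms $\|u\|_{W^m_p(\Omega \cap B_i)}$ and $\|u \circ \varphi_i^{-1}\|_{W^m_p(D_i)}$ differ only by a fixed multiplicative constant.

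On each local Euclidean piece I would apply the standard Euclidean zero estimate for functions vanishing on a sufficiently dense set in a star-shaped (with respect to a ball) domain — the Narcowich--Ward--Wendland / Brenner--Scott type argument. The mechanism is: construct a local polynomial $q$ of degree $<m$ that interpolates $u$ at a Lagrange-like subset of the zeros selected inside each star; by the Bramble--Hilbert lemma $\|u - q\|_{W^k_p} \le C h^{m-k}|u|_{W^m_p}$, while since $u$ vanishes on the full set and $q$ does so also at the selected points, the polynomial $q$ has small norm and one estimates $\|q\|_{W^k_p} \le C h^{m-k}\|u\|_{W^m_p}$ by inverse estimates on the polynomial space together with the Markov inequality. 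This yields $\|u\|_{W^k_p(D_i)} \le C h^{m-k}\|u\|_{W^m_p(D_i)}$ for $h$ smaller than some threshold depending on the cone parameters.

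Pulling back to $\Omega \cap B_i$ and summing the $p$-th powers over the finitely many patches (using the bounded-overlap property of the cover) gives the global bound
\[
\|u\|_{W^k_p(\Omega)}^p = \sum_i \|u\|_{W^k_p(\Omega \cap B_i)}^p \le C h^{p(m-k)} \sum_i \|u\|_{W^m_p(\Omega \cap B_i)}^p \le C h^{p(m-k)}\|u\|_{W^m_p(\Omega)}^p.
\]
The $L^\infty$ bound then follows by combining the $W^k_p$ bound for $k$ just above $d/p$ with the Sobolev embedding $W^{k}_p \hookrightarrow L^\infty$, or directly by applying a pointwise polynomial reproduction argument with a Taylor remainder estimate on each local star: select any point $x \in \Omega$, choose a nearby cone, reproduce $u(x)$ via the local polynomial, and estimate the Taylor remainder by $h^{m-d/p}\|u\|_{W^m_p}$.

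The main obstacle is the geometric bookkeeping: ensuring that the local charts can be chosen uniformly so that the cone condition and the diameter-to-mesh ratio needed for the Euclidean result transfer cleanly, and that the threshold on $h$ can be chosen independently of the patch. Once this uniform local trivialization is in place, the local Euclidean estimate is standard; the novelty (and the content of \cite{HanNarWar12}) is precisely the careful verification that the cone/mesh constants behave uniformly across the manifold covering so that the constants in the final inequality depend only on $m,k,p$ and $\bbM$.
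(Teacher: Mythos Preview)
The paper does not give its own proof of this theorem: it is quoted verbatim as a known result from \cite{HanNarWar12} (Hangelbroek, Narcowich, Ward), with no argument supplied. So there is nothing in the paper to compare your proposal against.

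That said, your sketch is a reasonable outline of the strategy actually used in \cite{HanNarWar12}: reduce to a Euclidean sampling inequality via local charts, apply a Narcowich--Ward--Wendland / Bramble--Hilbert type zero estimate on each patch, and patch the local estimates together using bounded overlap. You correctly identify the substantive issue as the uniform control of the geometric constants (cone parameters, chart distortion, threshold on $h$) across the covering. One technical point you gloss over is that in \cite{HanNarWar12} the covering is not by balls of radius comparable to $h$ but by a fixed atlas, with the $h$-dependence entering only through the Euclidean sampling inequality on each chart image; your version with $h$-scale balls would require the number of patches to grow as $h\to 0$, which complicates the claim that constants are independent of $h$. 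Otherwise the proposal is in the right spirit, but since the paper simply cites the result, no comparison is possible here.
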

%%%%%%%%%%%%%%%%%%%%%%%%%%%%%%%%%%%%%%%%%%%%%%%%%%%%%%%%%%%%%%%%%%%%
\section{Boundary value problems on the sphere}\label{bvp-section}

After all these preparations we can formulate a boundary value
problem for an elliptic differential operators $L$. Our standard
application (and numerical example in Section~\ref{numerical-section})
will be $L=\kappa^2 I - \Delta^\ast$, where $I$ is the identity operator
and $\kappa$ is some fixed constant, on simply connected subregion $\Omega$ 
on $\bbS^n$ with a Lipschitz boundary $\partial \Omega$.
This partial differential equation occurs, for example, when 
solving the heat equation and the wave equation with separation 
of variables (for $\kappa \ne 0$) or in studying the vortex motion on the sphere
(for $\kappa = 0$). 

%Let $L$ be an injective differential operator of order $t\in\N$ with 
%the property that, for each $\ell\in\N_0$, $\bbH_\ell(\bbS^n)$ is the 
%eigenspace with respect to an eigenvalue $\widehat{L}(\ell)$. 
Let $s>2$, and let $\Omega$ be a simply connected subregion with a Lipschitz
boundary. Assume that 
the functions $f \in W_2^{s-2}(\Omega)$ and  
$g\in C(\partial \Omega)$ are given. We consider the following 
Dirichlet problem
\begin{equation}\label{equ:Dirichlet}
Lu = f  \mbox{ on } \Omega \quad\mbox{ and } u = g \mbox{ on } \partial \Omega.
\end{equation}

The existence and uniqueness of the solution to \eqref{equ:Dirichlet} follows from
the general theory of existence and uniqueness of the solution to Dirichlet problems
defined on Lipschitz domains in a Riemannian manifold \cite{mitrea-taylor}. 
%Also, from \cite[Proposition 3.3 and Theorem 5.1]{mitrea-taylor}, we infer that the
%exact solution satisfy the following regularity condition
%\begin{equation}\label{u:regular}
%   u \in W_2^s(\Omega)\cap C(\overline{\Omega}).
%\end{equation}

\begin{lemma}\label{L-property}
Let $n\geq 2$, and let $\Omega$ be a sub-domain on $\bbS^n$ with a Lipschitz boundary. Let $L = \kappa^2 I - \Delta^*$ for some fixed constant $\kappa \ge 0$ and let $s\geq 2 + n/2$. Then $L$ has the following properties:
\begin{itemize}%\addtolength{\itemsep}{-7pt}

\item[(i)] There exists a positive constant $c$ such that
\[
 \| L f \|_{H^{s-2}(\Omega)} \le c \|f\|_{H^s(\Omega)}.
\]
\item[(ii)] There exists a positive constant $c$ such that 
\[
    \inprod{Lf}{f}_{L_2(\Omega)} \geq c \|f\|_{L_2(\Omega)}^2
\]
for all $f\in W_2^s(\Omega)\cap C(\overline{\Omega})$ 
with $f=0$ on $\partial \Omega$.
\item[(iii)] There exists a positive constant $c$ such that 
\[
    \|f\|_{C(\overline{\Omega})} \leq c \|f\|_{C(\partial \Omega)} 
\]
for all $f \in W_2^s(\Omega) \cap C(\overline{\Omega})$
which satisfy $L f =0$ on $\Omega$.
\end{itemize}
\end{lemma}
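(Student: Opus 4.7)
The strategy is to dispatch the three items independently, each by an appropriate standard tool from elliptic PDE theory. For part (i), in each stereographic chart $(U_k,\psi_k)$ of the atlas $\cA$ the operator $\Delta^\ast$ pulls back to a linear second-order differential operator with smooth bounded coefficients on $B(0,1)$, and through the chart-based definition \eqref{def:local Sob} this makes $\Delta^\ast\colon H^s(\Omega)\to H^{s-2}(\Omega)$ continuous; the zeroth-order term $\kappa^2 I$ is trivially bounded via the continuous embedding $H^s(\Omega)\hookrightarrow H^{s-2}(\Omega)$. Summing the two contributions gives the claimed bound.

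For part (ii) the regularity $s\ge 2+n/2$ is more than enough to justify Green's identity on the Lipschitz domain $\Omega$,
\[
\int_\Omega (-\Delta^\ast f)\, f \dd\omega_n = \int_\Omega |\nabla f|^2 \dd\omega_n - \int_{\partial\Omega} f\, \partial_\nu f \dd\sigma,
\]
and since $f=0$ on $\partial\Omega$ kills the boundary term we obtain $\inprod{Lf}{f}_{L_2(\Omega)} = \kappa^2\|f\|_{L_2(\Omega)}^2 + \|\nabla f\|_{L_2(\Omega)}^2$. If $\kappa>0$ we may simply take $c=\kappa^2$. If $\kappa=0$ we invoke the Poincar\'e inequality for functions with vanishing trace on the Lipschitz domain $\Omega\subset\Sp^n$ (reducible to the Euclidean case chart by chart using the extension machinery of Theorem~\ref{thm:devore} and \eqref{def:local Sob}) to obtain $\|\nabla f\|_{L_2(\Omega)}^2\ge c_P\|f\|_{L_2(\Omega)}^2$, closing the estimate.

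Part (iii) is the weak maximum principle applied to the elliptic operator $L=\kappa^2 I-\Delta^\ast$, whose zeroth-order coefficient $\kappa^2\ge 0$ is nonnegative. If $M:=\max_{\overline\Omega}f$ were strictly greater than $\max_{\partial\Omega}f^+$, then $M>0$ would be attained at an interior point $\bx_0\in\Omega$, at which $-\Delta^\ast f(\bx_0)\ge 0$ and $\kappa^2 f(\bx_0)=\kappa^2 M\ge 0$, so $Lf(\bx_0)\ge \kappa^2 M$. For $\kappa>0$ this is strictly positive, contradicting $Lf\equiv 0$; for $\kappa=0$ the conclusion follows from the classical maximum principle for harmonic functions on a Riemannian manifold. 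Applying the same reasoning to $-f$ controls the minimum, and together these yield $\|f\|_{C(\overline\Omega)}\le\|f\|_{C(\partial\Omega)}$ with $c=1$. Of the three parts the only genuinely technical step is the Poincar\'e estimate needed in (ii) when $\kappa=0$; parts (i) and (iii) are essentially immediate once the Sobolev and chart framework of Section~\ref{prelim} is in place.
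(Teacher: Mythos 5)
Your proof is correct, and part (ii) coincides with the paper's own argument: Green's first identity gives $\inprod{Lf}{f}_{L_2(\Omega)}=\kappa^2\|f\|_{L_2(\Omega)}^2+\|\nabla f\|_{L_2(\Omega)}^2$ after the boundary term is killed by $f=0$ on $\partial\Omega$, followed by the Poincar\'e inequality (the paper cites Saloff-Coste's version for bounded domains in a Riemannian manifold directly rather than reducing to charts, and does not bother to dispose of the case $\kappa>0$ separately as you do). Parts (i) and (iii) take genuinely different routes. For (i) the paper stays intrinsic: for integer $s=m$ it expands $\|Lu\|^2_{W_2^{m-2}(\Omega)}$ via the surface-gradient form \eqref{def:local Sob 2} of the norm, uses $\Delta^\ast=-\nabla^\ast\nabla$ to shift derivatives, bounds the result by $\max\{\kappa^4,2\kappa^2,1\}\,\|u\|^2_{W_2^m(\Omega)}$, and then passes to real $s$ by interpolation of bounded operators; your chart-based pullback avoids the interpolation step and handles fractional $s$ at once, but the sentence ``summing the two contributions'' hides the commutator terms $[\chi_k,\Delta^\ast]f$ that appear because $\pi_k(\chi_k\,\Delta^\ast f)$ is not the pulled-back operator applied to $\pi_k(\chi_k f)$; these are first order and controlled by $\|f\|_{H^{s-1}(\Omega)}$, so the argument survives, but the step should be made explicit. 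For (iii) you run a direct weak-maximum-principle contradiction at an interior extremum, whereas the paper invokes the Pucci--Serrin strong maximum principle and applies it to the auxiliary functions $g_1=f-f(\by_1)$ and $g_2=f(\by_2)-f$ to force both extrema onto $\partial\Omega$, arriving at the same conclusion with $c=1$. Both versions require $f\in C^2(\Omega)$ so that $\Delta^\ast f(\bx_0)\le 0$ at an interior maximum makes pointwise sense; this is precisely what the hypothesis $s\ge 2+n/2$ and the Sobolev embedding $W_2^s(\Omega)\subset C^2(\Omega)$ provide, and the paper states it explicitly — you should too, since otherwise the step ``at which $-\Delta^\ast f(\bx_0)\ge 0$'' is unjustified for a merely Sobolev-regular $f$.
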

\begin{proof}

(i)

Suppose $s=m$, where $m$ is an integer. 
Using definition \eqref{def:local Sob 2}
and the fact that $\Delta^\ast = -\nabla^* \nabla$, where $\nabla^*$ denote
the surface divergent on the sphere, we have
\begin{align*}
\|Lu\|^2_{W^{m-2}_2(\Omega)} &= 
   \sum_{k=0}^{m-2} \inpro{\nabla^k Lu}{\nabla^k Lu}_{L_2(\Omega)}  \\
    &= \sum_{k=0}^{m-2} \inpro{\nabla^k(\kappa^2 u - \Delta^\ast u)}
                      {\nabla^k(\kappa^2 u - \Delta^\ast u)}_{L_2(\Omega)}  \\
    &= \sum_{k=0}^{m-2} \kappa^4 \inpro{\nabla^k u}{\nabla^k u}_{L_2(\Omega)}
                  - 2 \kappa^2 \inpro{\nabla^{k+1} u}
                  { \nabla^{k+1} u}_{L_2(\Omega)}  \\
    & \qquad \qquad\qquad             + \inpro{\nabla^{k+2} u}{ \nabla^{k+2} u}_{L_2(\Omega)}  \\
    &\le \max\{\kappa^4, 2\kappa^2, 1\}
           \sum_{k=0}^{m} \inpro{\nabla^k u}{\nabla^k u}_{L_2(\Omega)}                            \\ 
     &\le C\|u\|^2_{W^s_2(\Omega)}.       
\end{align*}
The case that $s$ is a real number follows from interpolation between
bounded operators. 

(ii)
With the assumption on $s$, the Sobolev imbedding theorem for functions
defined on Riemannian manifolds \cite[p.34]{hebey} implies that 
$W_2^s(\Omega)\subset C^2(\Omega)$. 

From Green's first surface identity 
\cite[(1.2.49)]{freeden-book}, or more generally, the first Green's formula for compact, connected,
and oriented manifolds in $\R^{n+1}$ \cite[p.84]{agricola-friedrich},
we find for any $f\in W_2^s(\Omega)\cap C(\overline{\Omega})$
with $f=0$ on $\partial \Omega$ that  
\begin{eqnarray*}
    \lefteqn{ \inprod{(\kappa^2-\Delta^\ast)f}{f}_{L_2(\Omega)} = 
    \kappa^2 \|f\|_{L_2(\Omega)}^2 - \inprod{\Delta^\ast f}{f}_{L_2(\Omega)} }\\
    & = & \kappa^2 \|f\|_{L_2(\Omega)}^2 + \|\nabla f\|_{L_2(\Omega)}^2
    - \int_{\partial \Omega} f(\bsx) \frac{\partial f(\bsx)}{\partial\nu}
    \rmd\sigma(\bsx) \\
    & = &\kappa^2 \|f\|_{L_2(\Omega)}^2 + \|\nabla f\|_{L_2(\Omega)}^2,
%    & \geq  & \min\{1,\kappa^2\} 
%    \left( \|f\|_{L_2(\Omega)}^2 + \|\nabla^\ast f\|_{L_2(\Omega)}^2 \right)
\end{eqnarray*}
where $\nabla$ is the surface gradient, $\nu$ the (external) unit normal
on the boundary $\partial \Omega$, and $\rmd\sigma$ the curve element 
of the boundary (curve) $\partial \Omega$. 
% Thus $L=\kappa^2-\Delta^\ast$ is {\em coercive}\/, which implies immediately 
%the property (ii) % in Theorem~\ref{bvp-theorem}:   
From the Poincar{\'e} inequality for a bounded domain on a Riemannian manifold
\cite{saloff-coste}, 
\[
    \|\nabla f\|_{L_2(\Omega)} \geq c \|f\|_{L_2(\Omega)}
\]
%and thus for all $f\in W_2^2(S(\bsx;r))\cap C(\overline{\Omega})$
%with $f=0$ on $\partial \Omega$,
for all $f\in W_2^s(\Omega)\cap C(\overline{\Omega})$
with $f=0$ on $\partial \Omega$. Thus
\[
    \inprod{(\kappa^2-\Delta^\ast)f}{f}_{L_2(\Omega)} 
%    \geq \min\{1,\kappa^2\} \|f\|_{W_2^1(\Omega)}^2 
    \geq (c+\kappa^2) \|f\|_{L_2(\Omega)}^2,
\]
from which property (ii) is proved.

(iii)

The property (iii) follows 
from the maximum principle for elliptic PDEs on 
manifolds. From \cite[Theorem~9.3]{pucci-serrin}, 
we know that every $g\in C^{1}(\Omega)$ which 
satisfies 
\[ 
\Delta^\ast g - \kappa^2 g \leq 0  \quad\mbox{on}\ 
\Omega \qquad\mbox{and}\qquad g\geq 0 
\quad\mbox{on}\ \Omega  
\]
in distributional sense satisfies the {\em strong maximum 
principle}\/, that is, if $g(\bsy_0)=0$ for some 
$\bsy_0\in \Omega$ then $g\equiv 0$ in $\Omega$.
In particular, this implies if 
$g\in C^{1}(\Omega)\cap C(\overline{\Omega})$ 
that $g$ assumes its zeros on the boundary.  

In our case $f\in W_2^s(\Omega)\cap C(\overline{\Omega})$,
and since $W_2^s(\Omega)\subset C^2(\Omega)$, we consider
(twice differentiable) classical solutions of $\kappa^2 f - \Delta^\ast f =0$.
From the strong maximum principle we may conclude that every
$f\in W_2^{s}(\Omega)\cap C(\overline{\Omega})$
that satisfies $\kappa^2 f - \Delta^\ast f =0$ has the property
\begin{equation}
    \sup_{\bsx\in \overline{\Omega}}|f(\bsx)| = 
    \sup_{\bsx\in\partial \Omega} |f(\bsx)|,  
\label{supremum-1}
\end{equation}
which establishes property (iii) in the Theorem%~\ref{bvp-theorem}.

This can be seen as follows: Consider 
$f\in W_2^{s}(\Omega)\cap C(\overline{\Omega})$
that satisfies $\kappa^2 f - \Delta^\ast f =0$. 
Let $\bsy_1\in\overline{\Omega}$ 
and $\bsy_2\in\overline{\Omega}$ be such that 
\[
    f(\bsy_1) = \min_{\bsy\in \overline{\Omega}} f(\bsy) \leq 
    f(\bsx) \leq  \max_{\bsy\in \overline{\Omega}} f(\bsy) = f(\bsy_2) 
    \qquad\mbox{for all}\ \bsx\in\overline{\Omega}.
\]
Then 
\begin{equation}
    \sup_{\bsx\in \overline{\Omega}}|f(\bsx)|
    = \left\{\begin{array}{l@{\qquad\mbox{if}\ }l}
    f(\bsy_2) & f\geq 0 \ \mbox{on}\ \overline{\Omega},\\
    -f(\bsy_1) & f\leq 0 \ \mbox{on}\ \overline{\Omega},\\
    \max\{-f(\bsy_1),f(\bsy_2)\} &
    \mbox{$f$ assumes negative and positive values}.
    \end{array}\right. 
\label{supremum-2}
\end{equation}
If $f(\bsy_1)\leq 0$, consider $g_1(\bsx):= f(\bsx)-f(\bsy_1)$. 
Then $g_1(\bsy_1)=0$ and $g_1(\bsx)\geq 0$ on $\overline{\Omega}$, 
and we have
\[
    (\Delta^\ast - \kappa^2) g_1 
    = (\Delta^\ast - \kappa^2 ) f + \kappa^2 f(\bsy_1)
    = \kappa^2 f(\bsy_1) \leq 0.
\]
Thus the strong maximum principle implies that $g_1$
assumes its zeros on the boundary and hence 
$\bsy_1\in\partial \Omega$. If $f(\bsy_2)\geq 0$, 
consider $g_2(\bsx):= f(\bsy_2)-f(\bsx)$. Then $g_2(\bsy_2)=0$ and 
$g_2(\bsx)\geq 0$ on $\overline{\Omega}$, and we find
\[
    (\Delta^\ast - \kappa^2) g_2 
    = - \kappa^2 f(\bsy_2) - (\Delta^\ast - \kappa^2 ) f 
    = - \kappa^2 f(\bsy_2) \leq 0.
\]
Thus the strong maximum principle implies that $g_2$
assumes its zeros on the boundary and hence 
$\bsy_2\in\partial \Omega$. Thus (\ref{supremum-2})
implies (\ref{supremum-1}).
\end{proof}

We now discuss a method to construct an approximate solution to
the Dirichlet problem~\ref{equ:Dirichlet} using radial basis functions. 
Assume that the values of the functions $f$ and $g$ are given
on the discrete sets $X_1:=\{\bsx_1,\bsx_2,\ldots,\bsx_M\} \subset \Omega$ 
and $X_2:=\{\bsx_{M+1},\ldots,\bsx_N\} \subset \partial \Omega$, 
respectively. Furthermore, assume that the local mesh norm 
$h_{X_1,\Omega}$ of $X_1$ and the mesh norm $h_{X_2,\partial \Omega}$
of $X_2$ along the boundary $\partial \Omega$ 
(see (\ref{boundary-mesh-norm}) below) are sufficiently small.
We wish to find an approximation of the solution 
$u\in W_2^s(\Omega)\cap C(\overline{\Omega})$ of 
the {\em Dirichlet boundary value problem}\/
\[
    L u = f \quad\mbox{on}\ \Omega\qquad\mbox{and}\qquad 
      u = g \quad\mbox{on}\ \partial \Omega.
\]
%\end{problem}

%To solve Problem~\ref{bvp-problem},
Let $\Xi=\Xi_1\cup\Xi_2$ with
$\Xi_1 := \{\delta_{\bsx_j}\circ L \,:\, j=1,2,\ldots,M\}$
and $\Xi_2 := \{ \delta_{\bsx_j} \,:\, j=M+1,\ldots,N\}$.

We choose a RBF $\phi$ such that $\calN_\phi = H^s(\bbS^n)$ for some 
$s>2+\lfloor n/2 + 1 \rfloor$. Under the assumption that
$\Xi$ is a set of linearly independent functionals,
we compute the 
RBF approximant $\Lambda_\Xi u$, defined by
%More precisely, the approximation space is given by  
%$V_{\Xi} = V_{\Xi_1}\oplus V_{\Xi_2}$, where 
%$V_{\Xi_1} := \bbSan\{(\delta_{\bsx_j}\circ L)_2\phi(\cdot,\cdot)
%= L_2\phi(\cdot,\bsx_j) \,:\, j=1,2,\ldots,N\}$
%and $V_{\Xi_2} := \bbSan\{ \phi(\cdot,\bsx_j) \,:\, 
%j=M+1,\ldots,N\}$, 
\begin{equation}
\label{local-RBF-approx}
    \Lambda_\Xi u = \sum_{j=1}^M \alpha_j L_2\phi(\cdot,\bsx_j)
                   +\sum_{j=M+1}^N \alpha_j \phi(\cdot,\bsx_j),
\end{equation}
in which the coefficients $\alpha_j$, for $j=1,\ldots,N$, are computed from 
the {\em collocation conditions}\/
\begin{eqnarray}
    L(\Lambda_\Xi u)(\bsx_j) & = & f(\bsx_j), \qquad j=1,2,\ldots,M,
    \label{collocation-local-1}\\
    \Lambda_\Xi u (\bsx_j) & = & g(\bsx_j), \qquad j=M+1,\ldots,N.
    \label{collocation-local-2}
\end{eqnarray}

We want to derive $L_2(\Omega)$-error estimates between the approximation
and the exact solution, which is stated in the following theorem. 
%is the main result of this paper. 
\begin{theorem}\label{bvp-theorem}
%Let $n\geq 2$, and let $\Omega$ be the open spherical cap of center 
%$\bsz\in\bbS^n$ and radius $r\leq\pi/2$.
Let $L = \kappa^2 I - \Delta^*$ for some fixed constant $\kappa \ge 0$
and let $s\geq 2 + \lfloor n/2+1 \rfloor$. Consider the Dirichlet boundary value problem  
\[
    L u = f \quad\mbox{on}\ \Omega
    \qquad\mbox{and}\qquad    
      u = g \quad\mbox{on}\ \partial \Omega,
\]
where we assume that the unknown solution $u$ is in 
$W_2^{s}(\Omega)\cap C(\overline{\Omega})$ and that 
$f\in W_2^{s-2}(\Omega)$ and $g\in C(\partial \Omega)$.
Assume that $f$ is given on the point set \linebreak
$X_1 = \{\bsx_1,\bsx_2,\ldots,\bsx_M\}\subset \Omega$ with 
sufficiently small local mesh norm $h_{X_1,\Omega}$, and
suppose that $g$ is given on the point set  
$X_2 = \{\bsx_{M+1},\ldots,\bsx_N\}\subset \partial \Omega$
with sufficiently small mesh norm $h_{X_2,\partial \Omega}$. Let $\phi$ be a 
positive definite zonal continuous kernel of the form (\ref{phi-expansion}) 
for which
\begin{equation}\label{cond:whphi}
       a_\ell \sim (1+\lambda_\ell)^{-s}.
\end{equation}       
Let $\Lambda_{\Xi} u$ denote the RBF approximant (\ref{local-RBF-approx})
which satisfies the collocation conditions (\ref{collocation-local-1})
and (\ref{collocation-local-2}). Then 
\begin{equation}
\label{local-L2-error}
    \| u - \Lambda_{\Xi} u \|_{L_2(\Omega)} 
    \leq c \max\{h_{X_1,\Omega}^{s-2},h_{X_2,\partial \Omega}^{s-n/2}\} 
    \|u\|_{W_2^s(\Omega)}. 
\end{equation} 
\end{theorem}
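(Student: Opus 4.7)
The plan is to work with the error function $e := u - \Lambda_\Xi u$. From the collocation conditions (\ref{collocation-local-1})--(\ref{collocation-local-2}) together with $Lu = f$ on $\Omega$ and $u = g$ on $\partial \Omega$, the function $Le$ vanishes on $X_1 \subset \Omega$ while $e$ itself vanishes on $X_2 \subset \partial \Omega$, so the error gives rise to two scattered-zeros situations. I would split $e = e_1 + e_2$, where $e_1$ solves $Le_1 = Le$ on $\Omega$ with $e_1 = 0$ on $\partial \Omega$, and $e_2$ solves $Le_2 = 0$ on $\Omega$ with $e_2 = e|_{\partial \Omega}$ on $\partial \Omega$. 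Both problems admit unique solutions in $W_2^s(\Omega) \cap C(\overline{\Omega})$ by the existence and uniqueness theory cited after~(\ref{equ:Dirichlet}), so the decomposition is well defined.

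For $e_1$, I would apply Lemma~\ref{L-property}(ii) and Cauchy--Schwarz to get $c\|e_1\|_{L_2(\Omega)}^2 \le \inprod{Le_1}{e_1}_{L_2(\Omega)} = \inprod{Le}{e_1}_{L_2(\Omega)} \le \|Le\|_{L_2(\Omega)}\|e_1\|_{L_2(\Omega)}$, so that $\|e_1\|_{L_2(\Omega)} \le c^{-1}\|Le\|_{L_2(\Omega)}$. Since $Le \in W_2^{s-2}(\Omega)$ vanishes on $X_1$, Theorem~\ref{thm:HNW12} (with $k=0$, $p=2$, $m=s-2$) gives $\|Le\|_{L_2(\Omega)} \le C h_{X_1,\Omega}^{s-2}\|Le\|_{W_2^{s-2}(\Omega)}$, and Lemma~\ref{L-property}(i) then replaces the right-hand factor by a multiple of $\|e\|_{H^s(\Omega)}$.

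For $e_2$, the maximum principle in Lemma~\ref{L-property}(iii) reduces the task to controlling $\|e\|_{C(\partial\Omega)}$. Since $e$ vanishes on $X_2 \subset \partial\Omega$, I would invoke the $L_\infty$ branch of Theorem~\ref{thm:HNW12} on the $(n-1)$-dimensional manifold $\partial\Omega$ with $p=2$ and smoothness index $m = s-1/2$ to get $\|e\|_{L_\infty(\partial\Omega)} \le C h_{X_2,\partial\Omega}^{(s-1/2)-(n-1)/2}\|e\|_{H^{s-1/2}(\partial\Omega)} = C h_{X_2,\partial\Omega}^{s-n/2}\|e\|_{H^{s-1/2}(\partial\Omega)}$; the trace Theorem~\ref{trace} then absorbs the boundary norm into $\|e\|_{H^s(\Omega)}$, and passing from the sup-norm on $\overline{\Omega}$ to the $L_2$-norm on $\Omega$ costs only the factor $|\Omega|^{1/2}$. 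Combining with the $e_1$ estimate yields $\|e\|_{L_2(\Omega)} \le c\max\{h_{X_1,\Omega}^{s-2},\, h_{X_2,\partial\Omega}^{s-n/2}\}\|e\|_{H^s(\Omega)}$.

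The final step is to bound $\|e\|_{H^s(\Omega)}$ by $\|u\|_{W_2^s(\Omega)}$. I would extend $u$ to $\Sp^n$ via Theorem~\ref{extension}, obtaining $Eu \in H^s(\Sp^n)$ with $\|Eu\|_{H^s(\Sp^n)} \le c\|u\|_{H^s(\Omega)}$; the hypothesis~(\ref{cond:whphi}), when combined with the norm formulas (\ref{Hs-norm}) and (\ref{norm-native-space}), makes $H^s(\Sp^n)$ and $\cN_\phi$ the same space with equivalent norms. Because $Eu|_\Omega = u$, the collocation data for $Eu$ coincide with that for $u$, so $\Lambda_\Xi u = \Lambda_\Xi(Eu)$, and the orthogonality bound~(\ref{pythagoras}) gives $\|Eu - \Lambda_\Xi u\|_\phi \le \|Eu\|_\phi$. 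Restricting to $\Omega$ and converting norms completes the chain. The hard part, I expect, is the boundary estimate for $e_2$: one must verify that $\partial\Omega$ meets the uniform cone hypothesis of Theorem~\ref{thm:HNW12} as an $(n-1)$-manifold and pair the exponent produced there with the trace-theorem loss of $1/2$ derivative so that the exponent $s - n/2$ emerges cleanly. The assumption $s \ge 2 + \lfloor n/2+1 \rfloor$ is exactly what is needed to ensure that $L\Lambda_\Xi u$ is classically well defined at the points of $X_1$, allowing the collocation conditions to be interpreted pointwise.
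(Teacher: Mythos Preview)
Your proposal is correct and follows essentially the same argument as the paper. Your decomposition $e = e_1 + e_2$ is exactly the paper's splitting via the auxiliary function $w$ (with $e_1 = w - \Lambda_\Xi u$ and $e_2 = u - w$), and the subsequent use of Lemma~\ref{L-property}(ii) with Cauchy--Schwarz for $e_1$, Lemma~\ref{L-property}(iii) for $e_2$, Theorem~\ref{thm:HNW12} on $\Omega$ and on $\partial\Omega$, the trace theorem, and the extension/native-space orthogonality chain all match the paper's Steps~1--3 line by line.
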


%To our knowledge, this is the first attempt to give an error 
%analysis for an approach to solve the boundary problem above 
%via collocation with radial basis functions. 
Our general approach follows the one discussed in 
\cite{franke-schback-1998a}, \cite{franke-schback-1998b}, and 
in \cite[Chapter~16]{wendland-book} for the case of boundary 
problems on subsets of $\R^n$. In contrast to the approach in 
\cite[Chapter~16]{wendland-book}, where the error analysis is 
based on the power function, we also use the results on functions with
scattered zeros (see Theorem~\ref{thm:HNW12}) locally
via the charts. 
%The assumptions on $\phi$ in 
%\cite[Chapter~16]{wendland-book} differ from ours which makes it 
%difficult to compare the results in \cite[Chapter~16]{wendland-book}
%for boundary value problems on subsets of $\R^n$
%with our results in terms of powers of the mesh norm.

%The proof of Theorem~\ref{bvp-theorem} follows from a series 
%of lemmas. We will first state the lemmas, then give the proof
%of Theorem~\ref{bvp-theorem} and finally prove the lemmas
%in Section~\ref{proof-lemmas-section}. 

%Let the assumptions be the same as in Theorem~\ref{bvp-theorem}.
\begin{proof}

{\em Step 1.}
First we prove the following inequality using the ideas from 
\cite[Theorem~5.1]{franke-schback-1998a}.
\begin{equation}
\label{L2-estimate}
    \|u-\Lambda_\Xi u\|_{L_2(\Omega)}  
    \leq \|L u - L (\Lambda_\Xi u)\|_{L_2(\Omega)} 
    + c \|u-\Lambda_\Xi u\|_{C(\partial \Omega)}.
\end{equation}

Since the boundary value problem has a unique solution, 
there exists a function 
$w\in W_2^s(\Omega)\cap C(\overline{\Omega})$ 
such that
\begin{equation}
\label{trick-0}
    Lw = Lu \quad\mbox{on}\ \Omega
    \qquad\mbox{and}\qquad 
    w = \Lambda_\Xi u \quad\mbox{on}\ \partial \Omega.
\end{equation}
From the triangle inequality, 
\begin{equation}
\label{trick-1}
    \|u - \Lambda_\Xi u\|_{L_2(\Omega)}
    \leq \|u - w\|_{L_2(\Omega)}
    + \|w - \Lambda_\Xi u\|_{L_2(\Omega)}
\end{equation} 
Since $L(u-w) = 0$ on $\Omega$ (from (\ref{trick-0})), the property
(iii) and (\ref{trick-0}) imply
\begin{equation}
\label{trick-2}
    \|u - w\|_{L_2(\Omega)}
    \leq c\|u - w\|_{C(\overline{\Omega})}
    \leq c \|u - w\|_{C(\partial \Omega)}
    = c \|u - \Lambda_\Xi u\|_{C(\partial \Omega)}.
\end{equation}
Since $w-\Lambda_\Xi u =0$ on $\partial \Omega$ (from (\ref{trick-0})),
the property (ii) and the Cauchy-Schwarz inequality yield that
\begin{eqnarray*}
    \|w-\Lambda_\Xi u\|_{L_2(\Omega)}^2 
    & \leq & \inprod{L(w-\Lambda_\Xi u)}{w-\Lambda_\Xi u}_{L_2(\Omega)} \\
    & \leq & \|L(w-\Lambda_\Xi u)\|_{L_2(\Omega)}
    \|w-\Lambda_\Xi u\|_{L_2(\Omega)},
\end{eqnarray*}
thus implying 
\begin{equation}
\label{trick-3}
    \|w-\Lambda_\Xi u\|_{L_2(\Omega)}
    \leq \|Lw-L(\Lambda_\Xi u)\|_{L_2(\Omega)}
    = \|Lu -L(\Lambda_\Xi u)\|_{L_2(\Omega)},
\end{equation}
where we have used $Lw=Lu$ on $\Omega$ in the last step.
Applying (\ref{trick-2}) and (\ref{trick-3}) in 
(\ref{trick-1}) gives 
\[
    \|u - \Lambda_\Xi u\|_{L_2(\Omega)}
    \leq c \|u - \Lambda_\Xi u\|_{C(\partial \Omega)}
    + \|Lu -L(\Lambda_\Xi u)\|_{L_2(\Omega)}
\]
which proves (\ref{L2-estimate}).

{\em Step 2}. In this step, we will estimate the first term
in the right hand side of \eqref{L2-estimate}.
By using Theorem~\ref{thm:HNW12}, we obtain

\begin{eqnarray}
    \|L u - L (\Lambda_\Xi u)\|_{L_2(\Omega)} 
    &\leq 
    c h_{X_1,\Omega}^{s-2}\|L u - L(\Lambda_\Xi u)\|_{W_2^{s-2}(\Omega)}
    \nonumber \\
  &\leq c h_{X_1,\Omega}^{s-2}\|u - \Lambda_\Xi u\|_{W_2^{s}(\Omega)},
  \label{operator-L2-estimate}
\end{eqnarray}
where we have used
the fact that $\|Lg\|_{W^{s-2}_2(\Omega)} \le C\|g\|_{W^s(\Omega)}$,
see Lemma~\ref{L-property} part i).

Next, our assumptions on the region $\Omega$ allow us to extend
the function $u\in W^s_2(\Omega)$ to a function $Eu \in W^s_2(\bbS^n)$.
Moreover, since $X\subset \Omega$ and $Eu|_{\Omega} = u|_{\Omega}$,
the generalized interpolant $\Lambda_{\Xi} u$ coincides with
the generalized interpolant $\Lambda_{\Xi} (Eu)$ on $\Omega$.
Finally, the Sobolev space norm on $W^s_2(\bbS^n)$ is equivalent
to the norm induced by the kernel $\phi$ and the generalized
interpolant is norm-minimal. This all gives
\begin{eqnarray}
\|u - \Lambda_\Xi u\|_{W_2^{s}(\Omega)} &=&
\| Eu - \Lambda_\Xi E u\|_{W_2^s(\Omega)} 
\le \| Eu - \Lambda_\Xi E u\|_{W_2^s(\bbS^n)} \nonumber\\
&\le& \|Eu\|_{W_2^s(\bbS^n)} \le C \|u\|_{W_2^s(\bbS^n)},\label{equ:est2}
\end{eqnarray}
which establishes the stated interior error estimate.

{\em Step 3}. In this step, we will estimate the second term
in the right hand side of \eqref{L2-estimate}.
For the boundary estimate, by using Theorem~\ref{thm:HNW12}
for $\partial\Omega$, which is manifold of dimension $n-1$, we obtain
\begin{equation}
\label{boundary-term-estimate}
    \|u-\Lambda_\Xi u\|_{C(\partial \Omega)} \leq 
    c h_{X_2,\partial \Omega}^{s-1/2-(n-1)/2}\,
    \|u - \Lambda_{\Xi} u\|_{W_2^{s-1/2}(\partial\Omega)}
\end{equation}
Using the trace theorem (Theorem~\ref{trace}) and \eqref{equ:est2}, we have
\begin{equation}\label{boundary2}
\|u - \Lambda_{\Xi} u\|_{W_2^{s-1/2}(\partial\Omega)}
\le C\|u - \Lambda_{\Xi} u\|_{W_2^{s}(\Omega)} \le C\|u\|_{W_2^{s}(\Omega)}
\end{equation}
The boundary estimate then follows from \eqref{boundary-term-estimate}--
\eqref{boundary2}. 

The desired estimate will follow from results of all three steps.
%The $L_2(\Omega)$-error estimate (\ref{local-L2-error}) follows 
%from combining (\ref{L2-estimate}), (\ref{operator-L2-estimate}),
%and (\ref{boundary-term-estimate}).
\end{proof}
%%%%%%%%%%%%%%%%%%%%%%%%%%%%%%%%%%%%%%%%%%%%%%%%%%%%%%%%%%%%%%%%%%%%%%%%%%%%%%%%
\section{Numerical experiments}\label{numerical-section}
In this section, we consider the following boundary value problem 
on the spherical cap of radius $\pi/3$ centered at the north pole:
\begin{align*}
 L u(\bsx) & := -\Delta^* u(\bsx) + u(\bsx) = f(\bsx),
  \quad \bsx \in G(\bsn;\pi/3), \\
   u(\bsx) & = g(\bsx) \hspace{4.27cm}\bsx \in \partial G(\bsn;\pi/3).
\end{align*}

Let $f$ be defined so that the exact solution is
given by the Franke function \cite{Fra79} defined on the unit
sphere $\bbS^2$. To be more precise, let
\[
  \bsx = (x,y,z) =
  (\sin\theta\cos\phi, \sin\theta\sin\phi,\cos\theta)
  \quad\text{ for }\quad \theta \in [0,\pi],\; \phi \in [0,2\pi).
\]
Then we define
\begin{align*}
 u(\bsx)  &= 0.75\exp\left(-\frac{ (9x-2)^2 + (9y-2)^2 }{4}\right) +
            0.75\exp\left(-\frac{(9x+1)^2}{49} - \frac{9y+1}{10}\right) \\
       & +  0.5\exp\left(-\frac{(9x-7)^2 + (9y-3)^2}{4}\right) -
            0.2\exp\left(-(9x-4)^2 - (9y-7)^2\right)
\end{align*}
and compute the function $f$ via the formula
\[
  f(\bsx(\theta,\phi)) =
    -\frac{1}{\sin\theta} \frac{ \partial }{\partial \theta}
      \left(\sin \theta \frac{\partial u}{\partial \theta} \right) -
        \frac{1}{\sin^2\theta} \frac{\partial^2 u} {\partial \phi^2}
          +  u(\bsx(\theta,\phi)).
\]
A plot of the exact solution $u$ is given in Figure~\ref{fig:exact}.
\begin{figure}[h]
\begin{center}
\scalebox{0.3}{\includegraphics{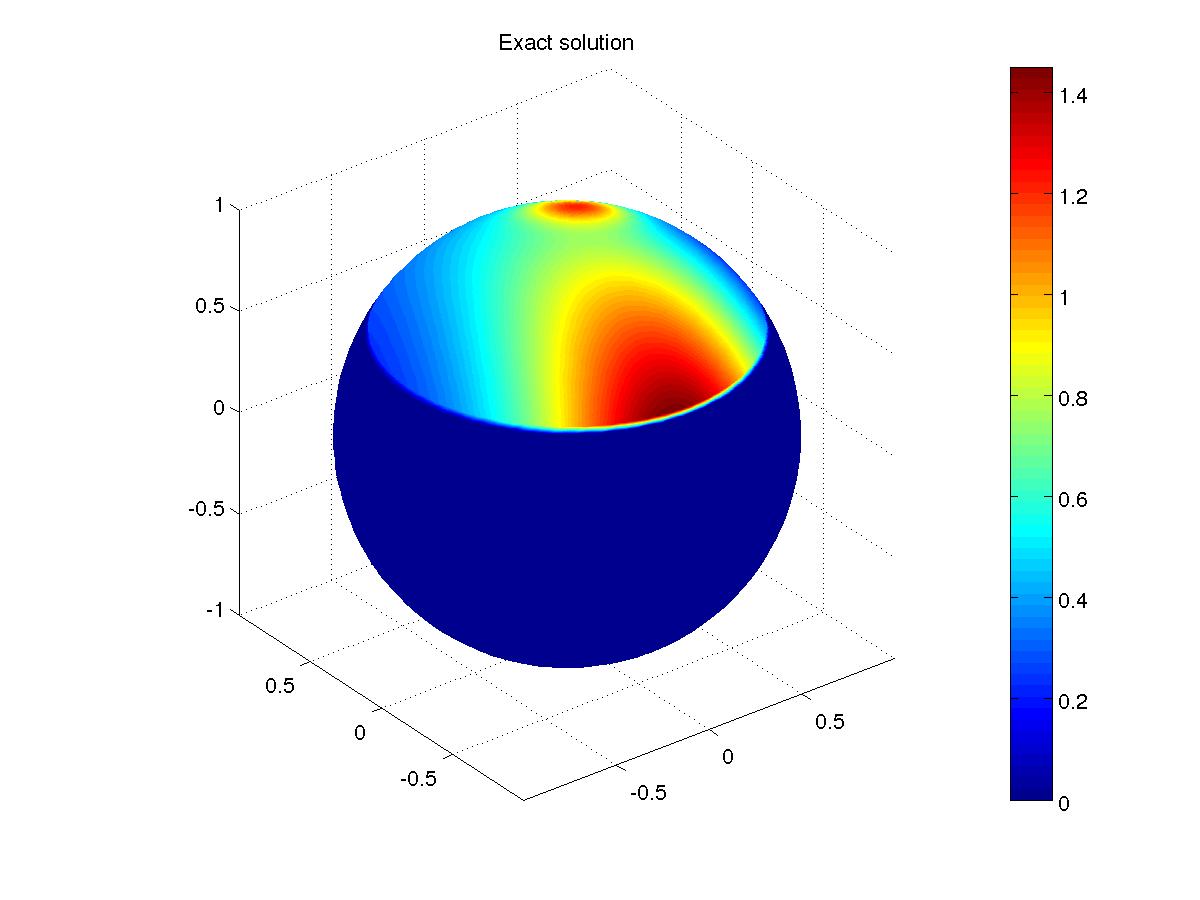}}
\caption{Exact solution}\label{fig:exact}
\end{center}
\end{figure}
Even though the algorithm allows the collocation points to be scattered
freely on the sphere, choosing sets of collocation points distributed roughly uniformly
over the whole sphere significantly improves the quality of the approximate
solutions and condition numbers. To this end, the sets of points used to
construct the approximate solutions are generated using the equal area 
partitioning algorithm \cite{RakSafZho94} adapted to a spherical cap. 

The RBF used is
\[
\psi(r) = (1-r)^8_{+} (1+8r+25r^2+32r^3)
\]
and
\[
\phi(\bsx,\bsy) = \psi(|\bsx - \bsy|) = \psi(\sqrt{2-2\bsx \cdot \bsy}).
\]
It can be shown that $\phi$ is a kernel which
satisfies condition \eqref{cond:whphi} with $s=9/2$
(\cite{narcowich-ward}).

The kernel $\phi$ is a zonal function, i.e. $\phi(\bsx,\bsy) = \Phi(\bsx \cdot \bsy)$
where $\Phi(t)$ is a univariate function. For zonal functions, the 
Laplace-Beltrami operator can be computed via
\[
 \Delta^* \Phi(\bsx \cdot \bsy ) = \calL \Phi(t),
  \quad t = \bsx \cdot \bsy,
\]
where
\[
  \calL  =  \frac{ d} {dt} (1-t^2) \frac{d}{dt}
\]
In our case,
\[
%  \calL \Phi(t) = 112 (\sqrt{2-2t}-1)^4
%                  \left(25t^2 -10t+ t-15 +
%                       \frac{  -8 t^2+8 t }
%                       {\sqrt{2-2t}}\right).
\calL \Phi(t) =  -44( \sqrt{2-2t} - 1)^6 
  \left(6t^2-18t-t+12 + \frac{208t^3  -260 t^2 -92t+144} {\sqrt{2-2t}}  \right).
\]
The normalized interior $L_2$ error $\|e\|$ is approximated by an $\ell_2$ error, 
thus in principle we define (note that the area of the cap $G(\bsn;\pi/3)$ is $\pi$)
\begin{align*}
\|e\| & := \left(
 \frac{1}{\pi}
    \int_{G(\bsn;\pi/3)} |u(\bsx) - \Lambda_{\Xi} u(\bsx)|^2 d\bsx  \right)^{1/2} \\
      & = \left(
        \frac{1}{\pi}
          \int_0^{\pi/3} \int_0^{2\pi} |u(\theta,\phi)-\Lambda_{\Xi}u(\theta,\phi)|^2
              \sin\theta d\phi d\theta \right)^{1/2},
\end{align*}
and in practice approximate this by the midpoint rule, 
\[
\left(\frac{1}{\pi} \frac{2\pi^2}{3|\calG|}
\sum_{\bsx(\theta,\phi) \in \calG}
|u(\theta,\phi) - \Lambda_{\Xi}u(\theta,\phi)|^2\sin\theta \right)^{1/2},
\]
where $\calG$ is a longitude-latitude grid in the interior of $G(\bsn;\pi/3)$ containing 
the centers of rectangles of size $0.9$ degree times $1.8$ degree and $|\calG| = 67 \times 200 = 13400$.

The supremum error $L^\infty(\partial G(\bsn;\pi/3))$ is approximated by 
$$\|e\|_\infty = \max_{\bsx \in \calG'} | u (\bsx) - \Lambda_{\Xi} u(\bsx)|$$ in which $\calG'$ is a set
of $3000$ equally spaced points on $\partial G(\bsn;\pi/3)$.

As can be seen from in Tables~\ref{tab:int} and \ref{tab:bdy}, the
numerical results show a better convergence rate predicted by Theorem~\ref{bvp-theorem}.
\begin{table}
\begin{center}
\begin{tabular}{|c|c|c|c|}
\hline
$M$  &  $h_{X_1}$ &  $\|e\|$   & $EOC$ \\
\hline
  500 &  0.0733 & 2.9000E-03 &  \\ 
 1000 & 0.0520 & 5.1602E-04 & 5.03 \\ 
 2000 & 0.0366 & 8.6364E-05 & 5.09 \\ 
 4000 & 0.0258 & 1.4596E-05 & 5.08 \\ 
\hline
\end{tabular}
\caption{Interior errors with a fixed number of boundary points $N-M = 200$}\label{tab:int}
\end{center}
\end{table}
\begin{table}
\begin{center}
\begin{tabular}{|c|c|c|c|}
\hline
$N-M$ & $h_{X_2}$ &  $\|e\|_\infty$ & $EOC$ \\
\hline
100 &  0.0272 & 2.7561E-05 &  \\ 
200 & 0.0136 & 7.0789E-08 & 8.60 \\ 
400 & 0.0068 & 1.0812E-10 & 9.35 \\ 
800 & 0.0034 & 8.4499E-13 & 7.00 \\ 
\hline
\end{tabular}
\caption{Boundary errors with a fixed number of interior points $M=1000$}\label{tab:bdy}
\end{center}
\end{table}
\begin{figure}[h]
\begin{center}
\scalebox{0.3}{\includegraphics{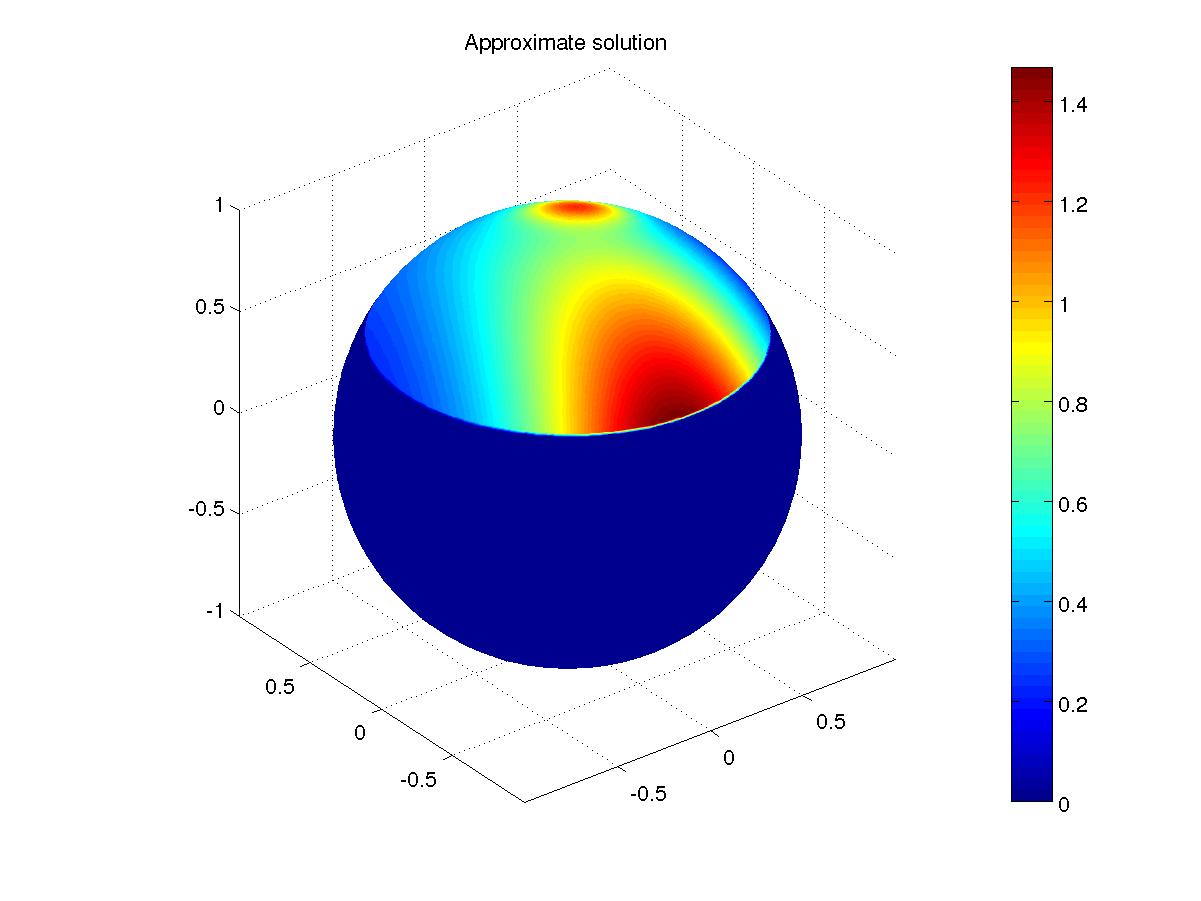}}
\caption{Approximate solution with $M=4000$ and $N=4200$}\label{fig:uX4000_rbf6}
\end{center}
\end{figure}
\begin{figure}[h]
\begin{center}
\scalebox{0.3}{\includegraphics{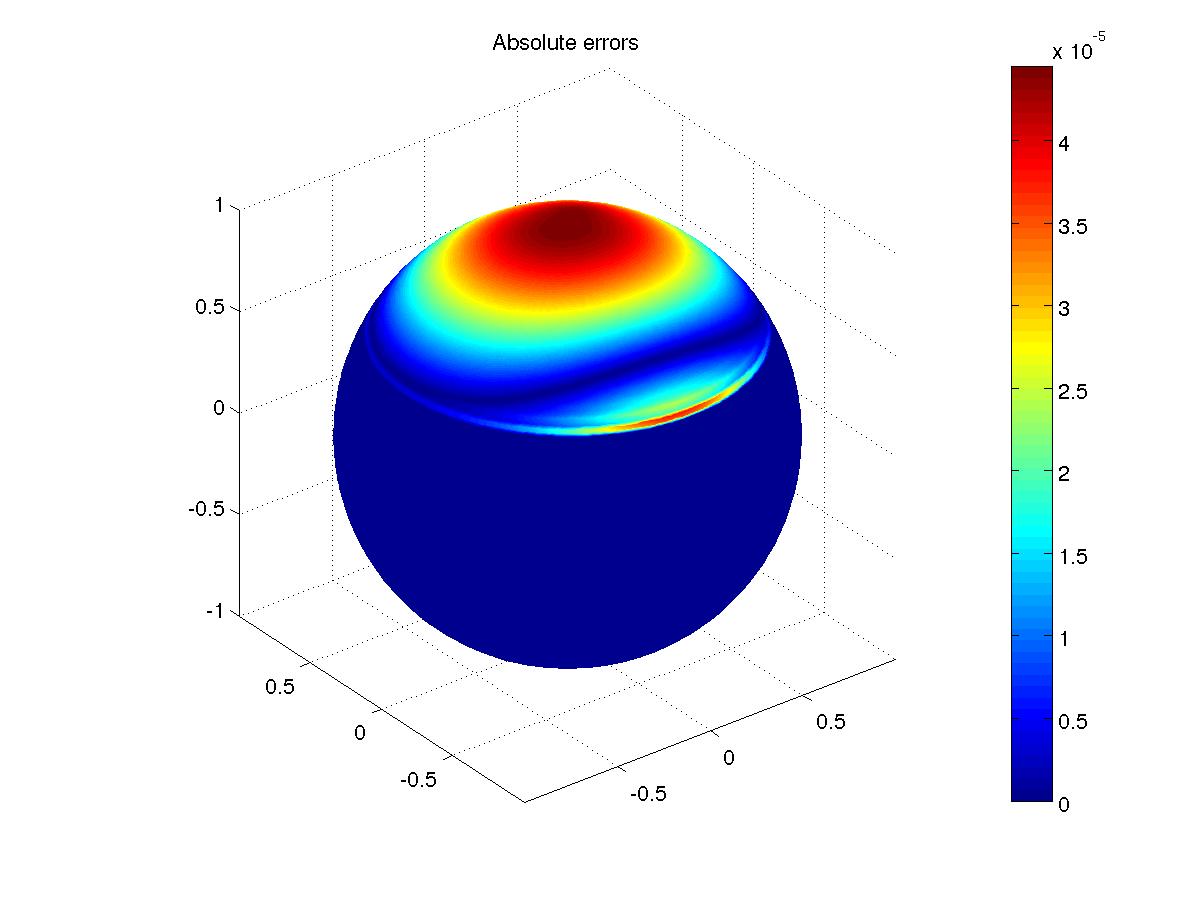}}
\caption{Absolute errors with $M=4000$ and $N=4200$}\label{fig:errX4000_rbf6}
\end{center}
\end{figure}
%
%
%%% The acknowledgements
\begin{acknowledgement}
The author is grateful to many helpful discussions with Dr. Kerstin Hesse
when writing the earlier version of the paper.
He would also like to thank Professor Francis Narcowich 
for pointing out the recent results on Sobolev bounds for functions 
with scattered zeros on a Riemannian manifold.
\end{acknowledgement}

\end{document}